%% This is file `elsarticle-template-1-num.tex',
%%
%% Copyright 2009 Elsevier Ltd
%%
%% This file is part of the 'Elsarticle Bundle'.
%% ---------------------------------------------
%%
%% It may be distributed under the conditions of the LaTeX Project Public
%% License, either version 1.2 of this license or (at your option) any
%% later version.  The latest version of this license is in
%%    http://www.latex-project.org/lppl.txt
%% and version 1.2 or later is part of all distributions of LaTeX
%% version 1999/12/01 or later.
%%
%% The list of all files belonging to the 'Elsarticle Bundle' is
%% given in the file `manifest.txt'.
%%
%% Template article for Elsevier's document class `elsarticle'
%% with numbered style bibliographic references
%%
%% $Id: elsarticle-template-1-num.tex 149 2009-10-08 05:01:15Z rishi $
%% $URL: http://lenova.river-valley.com/svn/elsbst/trunk/elsarticle-template-1-num.tex $
%%
%%\documentclass[preprint,12pt]{elsarticle}

%% Use the option review to obtain double line spacing

%\documentclass[preprint,review,10pt]{elsarticle}
\documentclass[11pt]{elsarticle}
\usepackage{mathrsfs}

%% Use the options 1p,twocolumn; 3p; 3p,twocolumn; 5p; or 5p,twocolumn
%% for a journal layout:
%% \documentclass[final,1p,times]{elsarticle}
%% \documentclass[final,1p,times,twocolumn]{elsarticle}
%%   \documentclass[final,3p,times]{elsarticle}
%% \documentclass[final,3p,times,twocolumn]{elsarticle}
%% \documentclass[final,5p,times]{elsarticle}
%% \documentclass[final,5p,times,twocolumn]{elsarticle}

%% if you use PostScript figures in your article
%% use the graphics package for simple commands
 \usepackage{graphics}
%% or use the graphicx package for more complicated commands
%% \usepackage{graphicx}
%% or use the epsfig package if you prefer to use the old commands

%%%%%%%%%%%%%%%\usepackage{epsfig}

%% The amssymb package provides various useful mathematical symbols
\usepackage{amssymb}
%% The amsthm package provides extended theorem environments
%% \usepackage{amsthm}

%% The lineno packages adds line numbers. Start line numbering with
%% \begin{linenumbers}, end it with \end{linenumbers}. Or switch it on
%% for the whole article with \linenumbers after \end{frontmatter}.
%% \usepackage{lineno}

%% natbib.sty is loaded by default. However, natbib options can be
%% provided with \biboptions{...} command. Following options are
%% valid:

%%   round  -  round parentheses are used (default)
%%   square -  square brackets are used   [option]
%%   curly  -  curly braces are used      {option}
%%   angle  -  angle brackets are used    <option>
%%   semicolon  -  multiple citations separated by semi-colon
%%   colon  - same as semicolon, an earlier confusion
%%   comma  -  separated by comma
%%   numbers-  selects numerical citations
%%   super  -  numerical citations as superscripts
%%   sort   -  sorts multiple citations according to order in ref. list
%%   sort&compress   -  like sort, but also compresses numerical citations
%%   compress - compresses without sorting
%%
%% \biboptions{comma,round}

% \biboptions{}
\usepackage{amsfonts}
\usepackage{pifont}
\usepackage{amsmath}
\usepackage{latexsym}
\usepackage{bm}
\usepackage{amssymb}
\usepackage{amsthm}
\usepackage{graphics}
\usepackage{epstopdf}
\usepackage{tikz}
\def\bi{\begin{align}}
\def\bin{\begin{align*}}
\headsep 0.5 true cm \topmargin 0pt \oddsidemargin 0pt

\evensidemargin 0pt \textheight 23 true cm \textwidth 16.5 true cm

\numberwithin{equation}{section}
\numberwithin{figure}{section}
\numberwithin{table}{section}
%\newtheorem{prop}{Proposition}[section]
%\newtheorem{exa}{Example}[section]
%\theoremstyle{definition}
%\newtheorem{defi}{Definition}[section]
%\theoremstyle{plain}
%\newtheorem{lem}{Lemma}[section]
%\theoremstyle{plain}
%\newtheorem{thm}{Theorem}[section]
%\newtheorem{coro}{Corollary}[section]
%%\theoremstyle{remark}
%\newtheorem{rem}{Remark}[section]

% THEOREM Environments ---------------------------------------------------
\newtheorem{thm}{Theorem}[section]

\newtheorem{lem}[thm]{Lemma}

\newtheorem{defn}[thm]{Definition}
\newtheorem{rem}[thm]{Remark}
\newtheorem{exa}{Example}[section]

% MATH Symbol -----------------------------------------------

\newcommand{\dif}{\mathrm{d}}

\newcommand{\ba}{\begin{array}}
\newcommand{\ea}{\end{array}}

%\journal{Applied Mathematics and Computation}

\begin{document}

\begin{frontmatter}

%% Title, authors and addresses

%% use the tnoteref command within \title for footnotes;
%% use the tnotetext command for the associated footnote;
%% use the fnref command within \author or \address for footnotes;
%% use the fntext command for the associated footnote;
%% use the corref command within \author for corresponding author footnotes;
%% use the cortext command for the associated footnote;
%% use the ead command for the email address,
%% and the form \ead[url] for the home page:
%%
%% \title{Title\tnoteref{label1}}
%% \tnotetext[label1]{}
%% \author{Name\corref{cor1}\fnref{label2}}
%% \ead{email address}
%% \ead[url]{home page}
%% \fntext[label2]{}
%% \cortext[cor1]{}
%% \address{Address\fnref{label3}}
%% \fntext[label3]{}

\title{Continuous-stage Runge-Kutta-Nystr\"{o}m methods}
%% use optional labels to link authors explicitly to addresses:
%% \author[label1,label2]{<author name>}
%% \address[label1]{<address>}
%% \address[label2]{<address>}

%\author{}
\author[a,b]{Wensheng Tang\corref{cor1}}
\ead{tangws@lsec.cc.ac.cn}\cortext[cor1]{Corresponding author.}
\address[a]{College of Mathematics and Statistics,\\
    Changsha University of Science and Technology,  Changsha 410114, China}
\address[b]{Hunan Provincial Key Laboratory of \\
    Mathematical Modeling and Analysis in Engineering,  Changsha 410114, China}
\author[]{}
%\ead{}

\begin{abstract}

We develop continuous-stage Runge-Kutta-Nystr\"{o}m (csRKN) methods
in this paper. By leading weight function into the formalism of
csRKN methods and modifying the original pattern of continuous-stage
methods, we establish a new and larger framework for csRKN methods
and it enables us to derive more effective RKN-type methods.
Particularly, a variety of classical weighted orthogonal polynomials
can be used in the construction of RKN-type methods. As an important
application, new families of symmetric and symplectic integrators
can be easily acquired in such framework. Numerical experiments have
verified the effectiveness of the new integrators presented in this
paper.

\end{abstract}

\begin{keyword}
%% keywords here, in the form: keyword \sep keyword
Continuous-stage Runge-Kutta-Nystr\"{o}m methods; Hamiltonian
systems; Symplectic methods; Symmetric methods; Orthogonal
polynomial expansion; Simplifying assumptions.
%% MSC codes here, in the form: \MSC code \sep code
%% or \MSC[2008] code \sep code (2000 is the default)

\end{keyword}

\end{frontmatter}

%%
%% Start line numbering here if you want
%%
% \linenumbers

%% main text
\section{Introduction}
\label{}

%% The Appendices part is started with the command \appendix;
%% appendix sections are then done as normal sections
%% \appendix

%% \section{}
%% \label{}

The seminal idea of continuous-stage methods was introduced by
Butcher (1972) in \cite{butcher72ato} (see also
\cite{butcher87tna,butcherw96rkm} for a more detailed description),
which suggests a ``continuous" extension of Runge-Kutta (RK) methods
by allowing the number of stages to be infinite so that the discrete
index set $\{1,2,\cdots,s\}$ becomes the interval $[0,1]$.
Unfortunately, this creative idea has been completely ignored in a
very long period of time. Such situation was continued until the
year 2010, Hairer activated the idea by using it to interpret his
energy-preserving collocation methods \cite{hairer10epv} and then an
elegant mathematical formalism for continuous-stage Runge-Kutta
methods was created by him. Since then, there has been a revival of
interest in the study of continuous-stage methods, and some
researchers consciously or unconsciously conduct their studies
closely related with such a subject. The first related work after
Hairer's was given by Tang \& Sun \cite{Tangs12tfe}, stating that
there is an interesting connection between Galerkin variational
methods and continuous-stage methods, and it was shown in
\cite{Tangs12tfe} that energy-preserving methods such as $s$-stage
trapezoidal methods \cite{Iavernarop07sst}, average vector field
methods \cite{quispelm08anc}, and infinite Hamiltonian boundary
value methods \cite{brugnanoit10hbv} (as well as Hairer's
energy-preserving collocation methods \cite{hairer10epv}) can be
unified in the framework of continuous-stage methods. In recent
years, there are a series of papers intensively studying in such
subject
\cite{Tangs12ana,Tangs14cor,Tanglx16cos,Tangz18spc,Tang18ano,
Tang18csr,Tang18csm,Tang18siw,Tang18aef,Tangz18sib}.

So far, the available methods with continuous stage can be grouped
into the following three classes: continuous-stage Runge-Kutta
(csRK) methods \cite{Tangs12ana,Tangs14cor,Tang18ano,Tang18csr},
continuous-stage partitioned Runge-Kutta (csPRK) methods
\cite{Tanglx16cos}, and continuous-stage Runge-Kutta-Nystr\"{o}m
(csRKN) methods \cite{Tangsz18hos,Tangz18spc}. It turns out that
with the idea of continuous-stage methods we can easily construct
many effective integrators of arbitrarily-high order, without
needing to solve the tedious nonlinear algebraic equations (usually
associated with the order conditions) in terms of many unknown
coefficients. Particularly, a crucial technique for constructing
continuous-stage methods with arbitrary order is developed in
\cite{Tangs14cor,Tangsz18hos,Tanglx16cos,Tangz18spc,Tang18csr},
which is mainly based on the orthogonal polynomial expansion.

Compared with standard RK \& RK-like discretizations, the
continuous-stage approaches may provide us a new insight in many
aspects of numerical solution of differential equations, seeing that
the Butcher coefficients (as functions) are assumed to be
``continuous" or ``smooth" which potentially allows us to use some
analytical tools such as Taylor expansion, inner product, limit
operation, orthogonal expansion, differentiation, integration, etc
\cite{liw16ffe,Tangs14cor,Tangsz18hos,Tanglx16cos,Tangz18spc,Tang18csr}.
Owing to this point, sometimes it may lead to surprising
applications. A good case in point is that no RK methods are
energy-preserving for general non-polynomial Hamiltonian systems
\cite{Celledoni09mmoqw}, whereas energy-preserving csRK methods can
be easily constructed
\cite{brugnanoit10hbv,hairer10epv,miyatake14aee,
miyatakeb16aco,quispelm08anc,Tangs12ana,Tangs12tfe,Tangs14cor}.
Another example is given by Tang \& Sun \cite{Tangs12tfe}, which
states that some Galerkin variational methods can be interpreted as
continuous-stage (P)RK methods, but they can not be completely
understood in the classical (P)RK framework.

Over the last few decades, geometric integration for the numerical
solution of differential equations has attracted much attention
(see, for example,
\cite{Benetting94oth,Channels90sio,Feng84ods,Feng95kfc,Fengqq10sga,
hairerlw06gni,lasagni88crk,Leimkuhlerr04shd,ruth83aci,sanz88rkm,
sanzc94nhp,suris88otc,suris89ctg,Vogelaere56moi}), for the reason
that numerical discretization respecting the geometric properties of
the exact flow are very important for long-time integration
\cite{Benetting94oth,hairerlw06gni,Shang99kam,Tang94feo}. In recent
years, continuous-stage methods have found their interesting
applications in geometric integration. For example, symplectic and
multi-symplectic integrators can be derived by using Galerkin
variational approaches, and these integrators can be interpreted and
analyzed in the framework of continuous-stage methods
\cite{Tangs12tfe,Tangsc17dgm,Tang18sio}; some newly-developed
energy-preserving methods can be closely related to continuous-stage
methods
\cite{brugnanoit10hbv,Celledoni09mmoqw,cohenh11lei,hairer10epv,liw16ffe,
miyatake14aee,miyatakeb16aco,quispelm08anc,Tangs12tfe,Tang18epi};
new families of symplectic and symmetric methods can be constructed
by using the idea of continuous-stage methods
\cite{Tangs12ana,Tangs14cor,Tangz18spc,Tang18ano,Tang18csr,Tang18csm,
Tang18siw,Tang18aef,Tangz18sib}; the study of conjugate
symplecticity of energy-preserving methods may be promoted in the
context of continuous-stage methods \cite{hairer10epv,hairerz13oco},
etc. Undoubtedly, other new applications of continuous-stage methods
in geometric integration are actively under development.

More recently, the present author et al.
\cite{Tangz18spc,Tangsz18hos} have developed symplectic RKN-type
integrators by virtue of continuous-stage methods. In this paper, we
are going to enlarge the primitive framework of csRKN methods to a
new one which enables us to treat more complicated cases. For this
sake, by using the similar idea presented in \cite{Tang18aef}, we
will lead weight function into the formalism of csRKN methods and
define the continuous-stage methods in a general interval $I$
(finite or infinite). By doing this, a variety of classical weighted
orthogonal polynomials can be used in the construction of RKN-type
methods. As an important application, new symmetric and symplectic
integrators can be easily derived in this new framework.

This paper will be organized as follows. In Section 2, we introduce
the new definition of csRKN methods for solving second-order
differential equations. This is followed by Section 3, where the
order theory by using simplifying assumptions will be given. Section
4 is devoted to present our approach for deriving symmetric and
symplectic integrators accompanied with some examples. We exhibit
our numerical results in Section 5. At last, we end our paper in
Section 6 with some concluding remarks.

\section{Continuous-stage Runge-Kutta-Nystr\"{o}m methods}

We are concerned with the initial value problem governed by a
second-order system
\begin{align}\label{eq:second}
q''=f(t, q),\;\;q(t_0)=q_0,\;\;q'(t_0)=q'_0,
\end{align}
where $f:\mathbb{R}\times\mathbb{R}^{d}\rightarrow\mathbb{R}^{d}$ is
assumed to be a smooth vector-valued function.
\begin{defn}\label{weight_func}
A non-negative function $w(x)$ is called a \emph{weight function} on
the interval $I$, if it satisfies the following two conditions:
\begin{itemize}
 \item[(a)] The $k$-th moment $\int_I x^k w(x)\,\dif x, \;k\in\mathbb{N}$ exists;
 \item[(b)] For any non-negative function $u(x)$, $\int_Iu(x)w(x)\,\dif
 x=0$ implies $u(x)\equiv0$.
 \end{itemize}
\end{defn}

Based on the notion of weight function, we introduce the following
definition of continuous-stage Runge-Kutta-Nystr\"{o}m methods which
is an extended version of that given in
\cite{Tangsz18hos,Tangz18spc}.

\begin{defn}\label{csRKN:def}
Let $w(x)$ be a weight function defined on $I$ (finite or infinite),
$\bar{A}_{\tau, \sigma}$ be a function of variables $\tau, \sigma\in
I$ and $\bar{B}_\tau,\;B_\tau,\;C_\tau$ be functions of $\tau\in I$.
The continuous-stage Runge-Kutta-Nystr\"{o}m (csRKN) method for
solving \eqref{eq:second} is given by
\begin{subequations}
    \begin{alignat}{2}
    \label{eq:csrkn1}
&Q_\tau=q_0 +hC_\tau q'_0 +h^2\int_I \bar{A}_{\tau, \sigma}
w(\sigma)
f(t_0+C_\sigma h, Q_\sigma) \dif \sigma, \;\;\tau \in I,\\
\label{eq:csrkn2} &q_{1}=q_0+ h q'_0+h^2 \int_I \bar{B}_\tau
w(\tau)f(t_0+C_\tau h, Q_\tau) \dif\tau, \\
\label{eq:csrkn3} &q'_1
= q'_0 +h\int_I B_\tau w(\tau) f(t_0+C_\tau h, Q_\tau) \dif \tau,
    \end{alignat}
\end{subequations}
which can be characterized by the following Butcher tableau
\[\ba{c|c} C_\tau & \bar{A}_{\tau,\sigma}w(\sigma)\\[4pt]
\hline & \bar{B}_\tau w(\tau)\\ \hline\\[-15pt] & B_\tau w(\tau)\ea\]
\end{defn}

\begin{rem}
For the case when $I$ is an infinite interval, we assume that the
improper integrals of (\ref{eq:csrkn1}-\ref{eq:csrkn3}) satisfy some
conditions (in terms of uniform convergence) such that
differentiation under the integral sign with respect to parameter
$h$ (step size) is legal.
\end{rem}

\begin{rem}
If we let $I=[0,1]$ and $w(x)=1$, then it results in the methods
developed in \cite{Tangsz18hos,Tangz18spc}. However, remark that the
primitive framework of csRKN methods given in
\cite{Tangsz18hos,Tangz18spc} can not be applicable for more
complicated cases, e.g., the case for weighting on a infinite
interval $(-\infty,+\infty)$ or any other general interval $I$.
\end{rem}

\section{Discussions on the order theory}

\begin{defn}\cite{hairernw93sod}
A csRKN method is called order $p$, if for all sufficiently regular
problem \eqref{eq:second}, as $h\rightarrow0$, its \emph{local
error} satisfies
\begin{equation*}
q(t_0+h)-q_1=\mathcal{O}(h^{p+1}),\quad
q'(t_0+h)-q'_1=\mathcal{O}(h^{p+1}).
\end{equation*}
\end{defn}

\subsection{The order of csRKN methods}

Following the idea of classical cases
\cite{hairernw93sod,hairerlw06gni}, we propose the following
simplifying assumptions\footnote{It should be noticed that in
$\mathcal{DN}(\zeta)$ we have removed ``$w(\sigma)$" from both sides
of the formula.}
\begin{equation*}\label{csRKN-simpl-assump}
\begin{split}
&\mathcal{B}(\xi):\quad \int_IB_\tau w(\tau)C_\tau^{\kappa-1}\,\dif
\tau=\frac{1}{\kappa},\;\; 1\leq\kappa\leq\xi,\\
&\mathcal{CN}(\eta):\quad
\int_I\bar{A}_{\tau,\,\sigma}w(\sigma)C_\sigma^{\kappa-1}\,\dif
\sigma=\frac{C_\tau^{\kappa+1}}{\kappa(\kappa+1)},\;\;1\leq\kappa\leq\eta-1,\\
&\mathcal{DN}(\zeta):\quad \int_IB_\tau w(\tau) C_\tau^{\kappa-1}
\bar{A}_{\tau,\,\sigma}\,\dif \tau=\frac{B_\sigma
C_\sigma^{\kappa+1}}{\kappa(\kappa+1)}-\frac{B_\sigma
C_\sigma}{\kappa} +\frac{B_\sigma}{\kappa+1},\;\;
1\leq\kappa\leq\zeta-1,
\end{split}
\end{equation*}
where $ \tau,\,\sigma\in I$.
\begin{thm}\label{ord_csRKN}
If the csRKN method (\ref{eq:csrkn1}-\ref{eq:csrkn3}) with its
coefficients satisfying the simplifying assumptions
$\mathcal{B}(p),\,\mathcal{CN}(\eta),\,\mathcal{DN}(\zeta)$, and if
$\bar{B}_\tau=B_\tau(1-C_\tau)$ is always fulfilled, then the method
is of order at least $\min\{p,\,2\eta+2,\eta+\zeta\}$.
\end{thm}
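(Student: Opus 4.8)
The plan is to prove the order result by comparing the Taylor expansions of the exact solution and the numerical solution, organized through the theory of rooted trees (SN-trees) adapted to Nyström methods, exactly as in the classical RKN order theory of Hairer, Nørsett & Wanner. The strategy rests on the observation that the simplifying assumptions $\mathcal{B}(p)$, $\mathcal{CN}(\eta)$, $\mathcal{DN}(\zeta)$ are the continuous-stage analogues of the classical RKN simplifying assumptions $B(p)$, $C(\eta)$, $D(\zeta)$, with sums replaced by weighted integrals over $I$. First I would set up the elementary-differential expansions: for each rooted tree $t$ appropriate to the second-order problem \eqref{eq:second}, the exact solution contributes a term with the coefficient $1/\gamma(t)$ (the reciprocal of the density), while the numerical increments $q_1$ and $q'_1$ contribute terms weighted by the \emph{elementary weights} $\Phi(t)$ and $\bar\Phi(t)$, which are now iterated integrals built from $C_\tau$, $\bar A_{\tau,\sigma}w(\sigma)$, $\bar B_\tau w(\tau)$ and $B_\tau w(\tau)$. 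Order $p$ amounts to showing $\Phi(t)=1/\gamma(t)$ and $\bar\Phi(t)=1/\gamma(t)$ (after accounting for the extra factor coming from the double integration structure) for all trees up to order $p$.

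Next I would exploit the three simplifying assumptions to reduce every elementary weight to its target value. The assumption $\mathcal{CN}(\eta)$ lets me collapse the innermost $\sigma$-integration of $\bar A_{\tau,\sigma}$ against a power $C_\sigma^{\kappa-1}$ into a clean power $C_\tau^{\kappa+1}/(\kappa(\kappa+1))$, which is precisely the action of the "internal stage" integration; iterating this prunes the subtrees attached to any stage and reduces the weight of a tall tree to that of a shorter one. The assumption $\mathcal{B}(p)$ then handles the final $\tau$-integration in $q'_1$, delivering the quadrature moments $1/\kappa$ needed for the correct density up to order $p$. The role of $\mathcal{DN}(\zeta)$ is the symmetric/dual reduction that allows pruning from the \emph{root} side, which is what is needed when the height of a tree exceeds what $\mathcal{CN}(\eta)$ alone can handle; combined with $\mathcal{B}(p)$ it yields order $\eta+\zeta$, and the purely $\mathcal{CN}$-driven saturation yields order $2\eta+2$. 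Taking the minimum of the three thresholds gives the stated bound $\min\{p,\,2\eta+2,\eta+\zeta\}$.

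The final ingredient is the relation $\bar B_\tau=B_\tau(1-C_\tau)$, which ties the expansion of $q_1$ (the position update) to that of $q'_1$ (the velocity update). I would show that this algebraic identity makes $\bar\Phi(t)$ expressible through $\Phi$-type weights: using $\int_I B_\tau w(\tau)(1-C_\tau)C_\tau^{\kappa-1}\,\dif\tau = \tfrac{1}{\kappa}-\tfrac{1}{\kappa+1}=\tfrac{1}{\kappa(\kappa+1)}$ (a direct consequence of $\mathcal{B}(p)$) reproduces exactly the moments required for the position increment, so that the position and velocity components attain the same order simultaneously. This is the step that guarantees both halves of the order definition (for $q-q_1$ and for $q'-q'_1$) hold under a single set of conditions.

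The main obstacle I anticipate is the careful bookkeeping of the \emph{Nyström} rooted-tree structure rather than ordinary RK trees: because $f$ depends only on $q$ (not on $q'$), the admissible trees are restricted and each application of $f$ corresponds to a double integration in the csRKN scheme, which shifts the power indices by two and forces the $\kappa\!+\!1$ shifts visible in $\mathcal{CN}$ and $\mathcal{DN}$. Verifying that the integral identities combine to give the precise density $\gamma(t)$ for every tree — and in particular confirming that the three thresholds saturate at exactly $2\eta+2$ and $\eta+\zeta$ (not off by one) — requires a disciplined induction on tree height, where the inductive step consumes one level of $\mathcal{CN}$ from the leaf side or one level of $\mathcal{DN}$ from the root side. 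Establishing that this induction terminates with the correct minimum, and that no lower-order tree is left unmatched, is where the genuine work lies; the continuous-stage setting makes each reduction an integration rather than a summation, but the combinatorial skeleton of the argument is inherited from the classical RKN theory assumed in \cite{hairernw93sod,hairerlw06gni}.
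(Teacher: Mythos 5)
Your proposal is sound and follows essentially the same route as the paper: the paper's entire proof is a one-line citation of Theorem 3.3 in \cite{Tangsz18hos}, which is precisely the continuous-stage adaptation of the classical Nystr\"{o}m rooted-tree/simplifying-assumption argument (with stage sums replaced by weighted integrals over $I$) that you sketch. The ingredients you identify --- $\mathcal{CN}$-reduction from the leaf side, $\mathcal{DN}$-reduction from the root side, $\mathcal{B}(p)$ for the bushy-tree quadrature moments, and $\bar{B}_\tau=B_\tau(1-C_\tau)$ forcing the position-component conditions $\int_I\bar{B}_\tau w(\tau)C_\tau^{\kappa-1}\,\dif\tau=\frac{1}{\kappa(\kappa+1)}$ to follow from the velocity-component ones --- are exactly those of the cited proof.
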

\begin{proof}
This is a straightforward result of Theorem 3.3 in
\cite{Tangsz18hos}.
\end{proof}

In what follows, we will use the hypothesis $C_\tau=\tau$ (and thus
$\bar{B}_\tau=B_\tau(1-\tau)$) throughout this paper. Let us
establish a lemma in the first place.
\begin{lem}\label{lem_assum}
With the hypothesis $C_\tau=\tau$, the simplifying assumptions
$\mathcal{B}(\xi), \mathcal{CN}(\eta)$ and $\mathcal{DN}(\zeta)$ are
equivalent to, respectively, {\small\begin{align}\label{eq:cd1}
&\mathcal{B}(\xi):\; \int_IB_\tau w(\tau) \phi(\tau)\,\dif
\tau=\int_0^1\phi(x)\,\dif x,\;\;  \forall\, \phi\;
\text{with}\;\emph{deg}(\phi)\leq\xi-1,\\\label{eq:cd2}
&\mathcal{CN}(\eta):\; \int_I\bar{A}_{\tau,\,\sigma} w(\sigma)
\phi(\sigma)\,\dif \sigma=\int_0^\tau \int_0^\alpha\phi(x)\,\dif
x\,\dif\alpha,\;\; \forall\,  \phi\; \text{with}\;
\emph{deg}(\phi)\leq\eta-2,\\\label{eq:cd3} &\mathcal{DN}(\zeta):\;
\int_IB_\tau \bar{A}_{\tau,\,\sigma}w(\tau)\phi(\tau)\,\dif
\tau=B_\sigma\Big(\int_0^\sigma\int_1^\alpha\phi(x)\,\dif
x\,\dif\alpha+\int_0^1x\phi(x)\,\dif x\Big),\;\;\forall\, \phi\;
\text{with}\; \emph{deg}(\phi)\leq\zeta-2,
\end{align}}
where $\emph{deg}(\phi)$ stands for the degree of polynomial
function $\phi$.
\end{lem}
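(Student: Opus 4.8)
The plan is to show that each simplifying assumption, originally stated as a family of conditions over monomial powers $\kappa$, is equivalent to the corresponding integral identity holding for \emph{all} polynomials $\phi$ of bounded degree. The fundamental observation is that both sides of each identity are linear in $\phi$, so it suffices to verify equivalence on the monomial basis $\phi(x)=x^{\kappa-1}$. Substituting $C_\tau=\tau$ turns the abstract conditions into statements about moments against the weighted measure $w(\tau)\,\dif\tau$, and the right-hand sides in \eqref{eq:cd1}--\eqref{eq:cd3} are precisely the iterated integrals of monomials computed in closed form.

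First I would treat $\mathcal{B}(\xi)$. Setting $\phi(x)=x^{\kappa-1}$ in \eqref{eq:cd1}, the right-hand side becomes $\int_0^1 x^{\kappa-1}\,\dif x=\tfrac{1}{\kappa}$, which is exactly the right-hand side of the original $\mathcal{B}(\xi)$ with $C_\tau=\tau$. Since $\{x^{\kappa-1}:1\leq\kappa\leq\xi\}$ spans the polynomials of degree $\leq\xi-1$, linearity gives the equivalence in both directions. Next, for $\mathcal{CN}(\eta)$ I would take $\phi(x)=x^{\kappa-1}$ in \eqref{eq:cd2} and compute the double integral $\int_0^\tau\int_0^\alpha x^{\kappa-1}\,\dif x\,\dif\alpha=\int_0^\tau\tfrac{\alpha^\kappa}{\kappa}\,\dif\alpha=\tfrac{\tau^{\kappa+1}}{\kappa(\kappa+1)}$, matching the right-hand side of $\mathcal{CN}(\eta)$ once $C_\tau=\tau$; the index range $1\leq\kappa\leq\eta-1$ corresponds exactly to $\deg(\phi)\leq\eta-2$.

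The analogous but more delicate computation is for $\mathcal{DN}(\zeta)$. With $\phi(x)=x^{\kappa-1}$ the claim is that the bracketed expression $\int_0^\sigma\int_1^\alpha x^{\kappa-1}\,\dif x\,\dif\alpha+\int_0^1 x\cdot x^{\kappa-1}\,\dif x$ reproduces $\tfrac{C_\sigma^{\kappa+1}}{\kappa(\kappa+1)}-\tfrac{C_\sigma}{\kappa}+\tfrac{1}{\kappa+1}$ after setting $C_\sigma=\sigma$. I would carry out the inner integral $\int_1^\alpha x^{\kappa-1}\,\dif x=\tfrac{\alpha^\kappa-1}{\kappa}$, then integrate in $\alpha$ over $[0,\sigma]$ to get $\tfrac{\sigma^{\kappa+1}}{\kappa(\kappa+1)}-\tfrac{\sigma}{\kappa}$, and finally add $\int_0^1 x^\kappa\,\dif x=\tfrac{1}{\kappa+1}$; the sum is precisely the prescribed right-hand side. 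Multiplying through by $B_\sigma$ reconciles it with the $B_\sigma$-weighted form in \eqref{eq:cd3}, and the footnote's remark that $w(\sigma)$ has been cancelled from both sides of the original $\mathcal{DN}(\zeta)$ explains why no weight factor appears on the right.

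The main obstacle, though essentially bookkeeping rather than conceptual, will be handling condition (b) of Definition~\ref{weight_func} correctly in $\mathcal{DN}(\zeta)$. Because the reduced form of $\mathcal{DN}(\zeta)$ has had $w(\sigma)$ removed from the right-hand side, the equivalence between the monomial identity and the polynomial identity holds as an identity in $\sigma$ pointwise, and one must be careful that the span argument applies uniformly in $\sigma$; here the non-degeneracy property (b) guarantees that vanishing of a continuous integrand against $w$ forces pointwise vanishing, which legitimizes treating these as genuine functional identities in $\sigma$ rather than merely almost-everywhere statements. Beyond this care, the proof is a routine verification that the closed-form iterated integrals of monomials match the stated right-hand sides, combined with the linearity-plus-spanning argument applied separately to each of the three assumptions.
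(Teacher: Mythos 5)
Your proof is correct and takes essentially the same route as the paper's: substitute $C_\tau=\tau$, check that the closed-form iterated integrals on the right-hand sides of \eqref{eq:cd1}--\eqref{eq:cd3} reproduce the original monomial conditions for each $\kappa$ (your $\mathcal{DN}$ computation $\int_0^\sigma\int_1^\alpha x^{\kappa-1}\,\dif x\,\dif\alpha+\int_0^1 x^{\kappa}\,\dif x=\tfrac{\sigma^{\kappa+1}}{\kappa(\kappa+1)}-\tfrac{\sigma}{\kappa}+\tfrac{1}{\kappa+1}$ matches exactly), and pass to general polynomials by linearity and the spanning of monomials. The only difference is your final paragraph invoking property (b) of Definition~\ref{weight_func}, which is unnecessary --- both the original and reformulated $\mathcal{DN}(\zeta)$ are already pointwise identities in the free variable $\sigma$, so no non-degeneracy argument is needed --- but this extra caution does no harm.
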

\begin{proof}
With the hypothesis $C_\tau=\tau$, we can rewrite $\mathcal{B}(\xi),
\mathcal{CN}(\eta)$ and $\mathcal{DN}(\zeta)$ as
{\small\begin{equation*}
\begin{split}
&\mathcal{B}(\xi):\;\; \int_IB_\tau w(\tau)\tau^{\kappa-1}\,\dif
\tau=\int_0^1x^{\kappa-1}\,\dif
x,\;\; 1\leq\kappa\leq\xi,\\
&\mathcal{CN}(\eta):\;\;
\int_I\bar{A}_{\tau,\,\sigma}w(\sigma)\sigma^{\kappa-1}\,\dif
\sigma=\int_0^\tau \int_0^\alpha x^{\kappa-1}\,\dif
x\,\dif\alpha,\;\;1\leq\kappa\leq\eta-1,\\
&\mathcal{DN}(\zeta):\;\; \int_IB_\tau w(\tau) \tau^{\kappa-1}
\bar{A}_{\tau,\,\sigma}\,\dif
\tau=B_\sigma\Big(\int_0^\sigma\int_1^\alpha x^{\kappa-1}\,\dif
x\,\dif\alpha+\int_0^1x\cdot x^{\kappa-1}\,\dif x\Big),\;\;
1\leq\kappa\leq\zeta-1,
\end{split}
\end{equation*}}
Therefore, these formulae are satisfied for all monomials like
$x^\iota$ with degree $\iota$ no lager than $\xi-1, \eta-2$ and
$\zeta-2$ respectively. Consequently, the final result follows from
the fact that any polynomial function $\phi$ can be expressed as a
linear combination of monomials.
\end{proof}
It is known that for a given weight function $w(x)$, there exists a
sequence of orthogonal polynomials in the \emph{weighted function
space} (Hilbert space) \cite{Szeg85op}
\begin{equation*}
L^2_w(I)=\{u \text{ is measurable on}\, I:\;\int_I|u(x)|^2w(x)\,\dif
x<+\infty\}
\end{equation*}
which is linked with the inner product
\begin{equation}\label{w_ip}
\big<u,v\big>_w=\int_Iu(x)v(x)w(x)\,\dif x.
\end{equation}

To proceed with our discussions, we denote a sequence of weighted
orthogonal polynomials by $\{P_n(x)\}_{n=0}^\infty$, which consists
of a complete set in the Hilbert space $L^2_w(I)$. It is known that
$P_n(x)$ has exactly $n$ real simple zeros in the interval $I$. For
convenience, in what follows we always assume the orthogonal
polynomials are normalized, i.e., satisfying
\begin{equation*}
\big<P_i,P_j\big>_w=\delta_{ij},\;\;i, j=0,1,2,\cdots.
\end{equation*}

\begin{thm}\label{ordcon_var}
Let $C_\tau=\tau$ and suppose\footnote{The notation
$\bar{A}_{\ast,\,\sigma}$ stands for the one-variable function in
terms of $\sigma$, and $\bar{A}_{\tau,\,\ast}$ can be understood
likewise.}
$B_{\tau},\,\,\bar{A}_{\ast,\,\sigma},\,\,(B_\tau\,\bar{A}_{\tau,\,\ast})\in
L^2_w(I)$, then we have
\begin{itemize}
\item[\emph{(a)}] $\mathcal{B}(\xi)$ holds $\Longleftrightarrow$ $B_\tau$ has
the following form in terms of the normalized orthogonal polynomials
in $L^2_w(I)$:
\begin{equation}\label{Bt}
B_\tau=\sum\limits_{j=0}^{\xi-1}\int_0^1P_j(x)\,\dif x
P_j(\tau)+\sum\limits_{j\geq\xi}\lambda_j P_j(\tau),
\end{equation}
where $\lambda_j$ are any real parameters;
\item[\emph{(b)}] $\mathcal{CN}(\eta)$ holds $\Longleftrightarrow$ $\bar{A}_{\tau,\,\sigma}$ has
the following form in terms of the normalized orthogonal polynomials
in $L^2_w(I)$:
\begin{equation}\label{Ats}
\bar{A}_{\tau,\,\sigma}=\sum\limits_{j=0}^{\eta-2}\int_0^\tau
\int_0^\alpha P_j(x)\,\dif x\,\dif\alpha
P_j(\sigma)+\sum\limits_{j\geq\eta-1}\phi_j(\tau) P_j(\sigma),
\end{equation}
where $\phi_j(\tau)$ are any $L^2_w$-integrable  real functions;
\item[\emph{(c)}] $\mathcal{DN}(\zeta)$ holds $\Longleftrightarrow$ $B_\tau \bar{A}_{\tau,\,\sigma}$ has
the following form in terms of the normalized orthogonal polynomials
in $L^2_w(I)$:
\begin{equation}\label{BtAts}
B_\tau\,\bar{A}_{\tau,\,\sigma}=\sum\limits_{j=0}^{\zeta-2}B_\sigma\Big(\int_0^\sigma\int_1^\alpha
P_j(x)\,\dif x\,\dif\alpha+\int_0^1xP_j(x)\,\dif x\Big)
P_j(\tau)+\sum\limits_{j\geq\zeta-1}\psi_j(\sigma) P_j(\tau),
\end{equation}
where $\psi_j(\sigma)$ are any $L^2_w$-integrable real functions.
\end{itemize}
\end{thm}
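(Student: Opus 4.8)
The plan is to handle all three equivalences by a single mechanism, exploiting that $\{P_n\}_{n=0}^\infty$ is a \emph{complete} orthonormal system in $L^2_w(I)$. Completeness is precisely what upgrades the simplifying assumptions from mere constraints on a few coefficients into a full reconstruction of the coefficient functions, and hence is what makes each statement an ``if and only if''. The common engine is the observation that $\deg P_k=k$, so that $\{P_0,\dots,P_m\}$ is a basis of the space of polynomials of degree $\leq m$; consequently an identity of the form $\langle g,\phi\rangle_w=\mathcal{L}(\phi)$ demanded for \emph{all} $\phi$ with $\deg\phi\leq m$ is equivalent to the finite list of scalar identities obtained by taking $\phi=P_0,\dots,P_m$, and these scalars are exactly the first $m+1$ Fourier coefficients of $g$.

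For part (a) I would expand $B_\tau=\sum_{j\geq0}b_jP_j(\tau)$ in $L^2_w(I)$, with $b_j=\langle B,P_j\rangle_w=\int_IB_\tau P_j(\tau)w(\tau)\,\dif\tau$. By Lemma \ref{lem_assum}, $\mathcal{B}(\xi)$ is just \eqref{eq:cd1}; testing this against $\phi=P_k$ for $0\leq k\leq\xi-1$ forces $b_k=\int_0^1P_k(x)\,\dif x$, while the coefficients with $j\geq\xi$ are unconstrained and may be renamed $\lambda_j$, which gives \eqref{Bt}. For the converse I would insert \eqref{Bt} into $\langle B,\phi\rangle_w$ for arbitrary $\phi$ with $\deg\phi\leq\xi-1$: writing $\phi=\sum_{k=0}^{\xi-1}\langle\phi,P_k\rangle_wP_k$, orthonormality annihilates every tail term $P_j$ with $j\geq\xi$ (each such $P_j$ is $w$-orthogonal to all polynomials of degree $\leq\xi-1$), and the surviving finite sum collapses to $\int_0^1\phi(x)\,\dif x$, which is precisely \eqref{eq:cd1}.

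Parts (b) and (c) proceed along identical lines, the only change being the variable in which one expands. In (b) one freezes $\tau$ and develops $\bar{A}_{\tau,\sigma}=\sum_ja_j(\tau)P_j(\sigma)$ in the $\sigma$ variable, so that $a_j(\tau)=\int_I\bar{A}_{\tau,\sigma}P_j(\sigma)w(\sigma)\,\dif\sigma$; by \eqref{eq:cd2} this pins down $a_k(\tau)=\int_0^\tau\int_0^\alpha P_k(x)\,\dif x\,\dif\alpha$ for $0\leq k\leq\eta-2$ and leaves $a_j(\tau)=\phi_j(\tau)$ free for $j\geq\eta-1$, yielding \eqref{Ats}. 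In (c) one freezes $\sigma$ and develops the product $B_\tau\bar{A}_{\tau,\sigma}=\sum_jc_j(\sigma)P_j(\tau)$ in the $\tau$ variable; here \eqref{eq:cd3} fixes $c_k(\sigma)=B_\sigma\big(\int_0^\sigma\int_1^\alpha P_k(x)\,\dif x\,\dif\alpha+\int_0^1xP_k(x)\,\dif x\big)$ for $0\leq k\leq\zeta-2$, the factor $B_\sigma$ simply being carried along, and leaves $c_j(\sigma)=\psi_j(\sigma)$ free for $j\geq\zeta-1$, yielding \eqref{BtAts}. The converses are again obtained by substituting the expansions back and invoking orthonormality; in (c) linearity lets $B_\sigma$ factor out of the finite sum to reproduce the right-hand side of \eqref{eq:cd3}. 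The membership hypotheses $B_\tau,\,\bar{A}_{\ast,\sigma},\,(B_\tau\bar{A}_{\tau,\ast})\in L^2_w(I)$ are exactly what guarantees that these three expansions exist.

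The one point I would treat with care is the legitimacy of integrating the series term by term against the weight $w$. For the backward implications this is costless, since the test polynomial $\phi$ has finite degree and continuity of the inner product reduces $\langle g,\phi\rangle_w$ to a finite sum; on an infinite interval $I$ any remaining interchange is covered by the uniform-convergence assumption recorded in the first remark following Definition \ref{csRKN:def}. Rather than a genuine obstacle, the essential ingredient is completeness of $\{P_n\}$ together with the exact alignment between the degree cut-offs $\xi-1,\eta-2,\zeta-2$ and the indices $\xi,\eta-1,\zeta-1$ at which the free parameters $\lambda_j,\phi_j,\psi_j$ switch on; checking that this alignment is respected in all three parts is what I would verify most carefully.
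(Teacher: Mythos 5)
Your proposal is correct and takes essentially the same route as the paper's own proof: expand $B_\tau$, $\bar{A}_{\tau,\sigma}$ (in $\sigma$) and $B_\tau\bar{A}_{\tau,\sigma}$ (in $\tau$) in the orthonormal basis $\{P_j\}$, substitute into the equivalent forms \eqref{eq:cd1}--\eqref{eq:cd3} of Lemma \ref{lem_assum} with $\phi=P_k$, and conclude that the Fourier coefficients with indices up to $\xi-1$, $\eta-2$, $\zeta-2$ are forced while all higher ones remain free. The paper states this tersely and leaves the converse direction implicit, whereas you verify it explicitly via orthonormality and note the $L^2_w$ membership and term-by-term integration issues, but the underlying argument is identical.
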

\begin{proof}
This theorem can be proved in the same manner as Theorem 2.3 of
\cite{Tang18csr}. For part $(a)$, consider the following orthogonal
polynomial expansion in $L^2_w(I)$
\begin{equation*}
B_\tau=\sum\limits_{j\geq0}\lambda_j P_j(\tau),\;\;\lambda_j\in
\mathbb{R},
\end{equation*}
and substitute the formula above into \eqref{eq:cd1} (with $\phi$
replaced by $P_j$) in Lemma \ref{lem_assum}, then it follows
\begin{equation*}
\lambda_j=\int_0^1P_j(x)\,\dif x,\;\;j=0,\cdots,\xi-1,
\end{equation*}
which gives \eqref{Bt}. For part $(b)$ and $(c)$, consider the
following orthogonal expansions of $\bar{A}_{\tau,\,\sigma}$ with
respect to $\sigma$ and $B_\tau\,\bar{A}_{\tau,\,\sigma}$ with
respect to $\tau$ in $L^2_w(I)$, respectively,
\begin{equation*}
\bar{A}_{\tau,\,\sigma}=\sum\limits_{j\geq0}\phi_j(\tau)
P_j(\sigma),\;\;\phi_j(\tau)\in L^2_w(I),
\end{equation*}
\begin{equation*}
B_\tau\,\bar{A}_{\tau,\,\sigma}=\sum\limits_{j\geq0}\psi_j(\sigma)
P_j(\tau),\;\;\psi_j(\sigma)\in L^2_w(I),
\end{equation*}
and then substitute them into \eqref{eq:cd2} and \eqref{eq:cd3},
which then leads to the final results.
\end{proof}

\begin{rem}\label{rem:csRKN_trunc}
For the sake of obtaining a practical csRKN method, we have to
define a finite form for $B_\tau$ and $\bar{A}_{\tau,\,\sigma}$. A
natural and simple way is to truncate the series \eqref{Bt} and
\eqref{Ats}. As a consequence, $B_\tau$ and
$\bar{A}_{\tau,\,\sigma}$ become polynomial functions.
\end{rem}

\subsection{The order of RKN methods by using quadrature formulas}

In the practical implementation, generally we have to approximate
the integrals of the csRKN method by numerical quadrature formulas.
For this sake, we introduce the following $s$-point \emph{weighted
interpolatory quadrature formula}
\begin{equation}\label{wquad}
\int_I\Phi(\tau)w(\tau)\,\dif \tau\approx\sum\limits_{i=1}^sb_i
\Phi(c_i),\;\;c_i\in I,
\end{equation}
where
\begin{equation*}
b_i=\int_I\ell_i(\tau)w(\tau)\,\dif
\tau,\;\;\ell_i(\tau)=\prod\limits_{j=1,j\neq
i}^s\frac{\tau-c_j}{c_i-c_j},\;\;i=1,\cdots,s.
\end{equation*}

After applying the quadrature formula to
\eqref{eq:csrkn1}-\eqref{eq:csrkn3}, it gives rise to an $s$-stage
RKN method
\begin{subequations}
    \begin{alignat}{2}
    \label{eq:rkn1}
&Q_i=q_0 +hC_i q'_0+h^2\sum\limits_{j=1}^{s}b_j\bar{A}_{ij}f(t_0+C_jh,\,Q_j), \quad i=1,\cdots, s, \\
    \label{eq:rkn2}
&q_{1}=q_0+ h q'_0+h^2\sum\limits_{i=1}^{s}b_i\bar{B}_if(t_0+C_ih,\,Q_i), \\
    \label{eq:rkn3}
&q'_{1}= q'_0+h\sum\limits_{i=1}^{s}b_iB_if(t_0+C_ih,\,Q_i),
    \end{alignat}
\end{subequations}
where $Q_i:=Q_{c_i}, \bar{A}_{ij}:=\bar{A}_{c_i, c_j},
\bar{B}_i:=\bar{B}_{c_i}, B_i:=B_{c_i}, C_i:=C_{c_i}=c_i$ (recall
that $C_\tau=\tau$), which can be characterized by
\begin{equation}\label{RKN:qua}
\ba{c|ccc} c_1 & b_1\bar{A}_{11}
& \cdots & b_s\bar{A}_{1s}\\[2pt]
\vdots &\vdots &\vdots\\[2pt]
c_s & b_1\bar{A}_{s1} &
\cdots & b_s\bar{A}_{ss}\\[2pt]
\hline & b_1\bar{B}_{1}  & \cdots & b_s\bar{B}_{s}\\[2pt]
\hline & b_1B_{1}  & \cdots & b_sB_{s}\ea
\end{equation}
In order to analyze the order of the RKN method \eqref{RKN:qua}, we
propose the following result which is closely related with Remark
\ref{rem:csRKN_trunc}.

\begin{thm}\label{qua:csRKN}
Assume the underlying quadrature formula \eqref{wquad} is of order
$p$, and $\bar{A}_{\tau,\,\sigma}$ is of degree $\pi_A^\tau$ with
respect to $\tau$ and of degree $\pi_A^{\sigma}$ with respect to
$\sigma$, and $B_{\tau}$ is of degree $\pi_B^\tau$. If we assume
$C_\tau=\tau,\,\bar{B}_\tau=B_\tau(1-\tau)$, and all the simplifying
assumptions $\mathcal{B}(\xi)$, $\mathcal{CN}(\eta)$,
$\mathcal{DN}(\zeta)$ are fulfilled, then the RKN method
\eqref{RKN:qua} is at least of order
$$\min\{\rho, \,2\alpha+2, \,\alpha+\beta\},$$
where $\rho=\min\{\xi,\,p-\pi_B^\tau\}$,
$\alpha=\min\{\eta,\,p-\pi_A^{\sigma}+1\}$ and $\beta=\min\{\zeta,\,
p-\pi_A^\tau-\pi_B^\tau+1\}$.
\end{thm}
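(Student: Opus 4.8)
The plan is to reduce Theorem~\ref{qua:csRKN} to the \emph{discrete} RKN order theorem underlying Theorem~\ref{ord_csRKN} (i.e.\ Theorem 3.3 of \cite{Tangsz18hos}), by verifying that the $s$-stage RKN method \eqref{RKN:qua} fulfils the discrete analogues of the simplifying assumptions $\mathcal{B}$, $\mathcal{CN}$, $\mathcal{DN}$ with the reduced indices $\rho,\alpha,\beta$. First I would read off the Butcher coefficients of \eqref{RKN:qua}: the internal position weights $b_j\bar{A}_{ij}$, the updating position weights $b_i\bar{B}_i$, the updating velocity weights $b_iB_i$, and the abscissae $c_i$. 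The hypothesis $\bar{B}_\tau=B_\tau(1-\tau)$ gives $b_i\bar{B}_i=(b_iB_i)(1-c_i)$, so the structural relation required by the order theorem is inherited automatically; it only remains to establish the three discrete conditions.

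The central mechanism is that the quadrature \eqref{wquad}, being of order $p$, reproduces $\int_I\Phi(\tau)w(\tau)\,\dif\tau$ exactly whenever $\Phi$ is a polynomial of degree $\le p-1$. For the discrete $\mathcal{B}$-condition I would view $\sum_i (b_iB_i)\,c_i^{\kappa-1}$ as the quadrature applied to $B_\tau\tau^{\kappa-1}$, a polynomial in $\tau$ of degree $\pi_B^\tau+\kappa-1$; hence the sum equals $\int_I B_\tau w(\tau)\tau^{\kappa-1}\,\dif\tau$ as soon as $\pi_B^\tau+\kappa-1\le p-1$, and the continuous assumption $\mathcal{B}(\xi)$ then fixes this value at $1/\kappa$ for $\kappa\le\xi$. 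Intersecting the two ranges gives precisely $\kappa\le\min\{\xi,\,p-\pi_B^\tau\}=\rho$. The same recipe treats $\mathcal{CN}$ and $\mathcal{DN}$, the only change being which variable carries the polynomial degree: freezing $\tau=c_i$ makes $\bar{A}_{c_i,\sigma}$ a degree-$\pi_A^\sigma$ polynomial in $\sigma$, so quadrature of $\bar{A}_{c_i,\sigma}\sigma^{\kappa-1}$ is exact for $\pi_A^\sigma+\kappa-1\le p-1$ and, combined with $\mathcal{CN}(\eta)$ (valid for $\kappa\le\eta-1$), yields the discrete $\mathcal{CN}$-condition for $\kappa\le\alpha-1$ with $\alpha=\min\{\eta,\,p-\pi_A^\sigma+1\}$; freezing $\sigma=c_j$ makes $B_\tau\tau^{\kappa-1}\bar{A}_{\tau,c_j}$ a degree-$(\pi_B^\tau+\pi_A^\tau+\kappa-1)$ polynomial in $\tau$, giving the discrete $\mathcal{DN}$-condition for $\kappa\le\beta-1$ with $\beta=\min\{\zeta,\,p-\pi_A^\tau-\pi_B^\tau+1\}$.

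A point demanding care in the $\mathcal{DN}$ step is that its discrete form must carry the weight $b_j$ on the right-hand side. Here the outer factor $b_j$ produced by the column sum $\sum_i (b_iB_i)\,c_i^{\kappa-1}\,(b_j\bar{A}_{ij})=b_j\sum_i b_iB_i\,c_i^{\kappa-1}\bar{A}_{ij}$ is exactly what turns the quadrature-exact value of the (weight-removed) $\mathcal{DN}(\zeta)$ right-hand side at $\sigma=c_j$ into $\frac{(b_jB_j)c_j^{\kappa+1}}{\kappa(\kappa+1)}-\frac{(b_jB_j)c_j}{\kappa}+\frac{b_jB_j}{\kappa+1}$, matching the required discrete template. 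Once all three discrete simplifying assumptions with indices $\rho,\alpha,\beta$ are verified and the relation $b_i\bar{B}_i=(b_iB_i)(1-c_i)$ is in place, the discrete order theorem delivers order at least $\min\{\rho,\,2\alpha+2,\,\alpha+\beta\}$, as claimed. I expect the principal difficulty to be the bookkeeping of the integrand's degree in the correct variable for each condition---in particular distinguishing $\pi_A^\tau$ from $\pi_A^\sigma$ between the $\mathcal{CN}$ and $\mathcal{DN}$ arguments---rather than anything conceptual.
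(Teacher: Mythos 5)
Your proposal is correct and follows essentially the same route as the paper: apply the order-$p$ quadrature to the continuous simplifying assumptions (with the correct degree bookkeeping in $\tau$ versus $\sigma$, which you handle exactly as the paper does) to obtain the classical discrete conditions $B(\rho)$, $CN(\alpha)$, $DN(\beta)$ together with $b_i\bar{B}_i=b_iB_i(1-c_i)$, then invoke the classical RKN order theorem. Your extra remark about the factor $b_j$ in the discrete $\mathcal{DN}$ condition is a detail the paper passes over silently but is consistent with its stated formulas.
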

\begin{proof}
Since the quadrature formula \eqref{wquad} holds for any polynomial
$\Phi(x)$ of degree up to $p-1$, by using it to compute the
integrals of $\mathcal{B}(\xi)$, $\mathcal{CN}(\eta)$,
$\mathcal{DN}(\zeta)$ it gives
\begin{equation*}
\begin{split}
&\sum_{i=1}^s(b_iB_i)c_i^{\kappa-1}=\frac{1}{\kappa},\;\kappa=1,\cdots,\rho,\\
&\sum_{j=1}^s(b_{j}\bar{A}_{ij})c_j^{\kappa-1}=\frac{c_i^{\kappa+1}}{\kappa(\kappa+1)},
\;i=1,\cdots,s,\;\kappa=1,\cdots,\alpha-1,\\
&\sum_{i=1}^s(b_iB_i)c_i^{\kappa-1}(b_{j}\bar{A}_{ij})=
\frac{(b_jB_j)c_j^{\kappa+1}}{\kappa(\kappa+1)}-\frac{(b_jB_j)c_j}{\kappa}+\frac{b_jB_j}{\kappa+1},
\;j=1,\cdots,s,\;\kappa=1,\cdots,\beta-1.
\end{split}
\end{equation*}
where $\rho=\min\{\xi,\,p-\pi_B^\tau\}$,
$\alpha=\min\{\eta,\,p-\pi_A^{\sigma}+1\}$ and $\beta=\min\{\zeta,\,
p-\pi_A^\tau-\pi_B^\tau+1\}$.  These formulas imply that the RKN
method with coefficients given by \eqref{RKN:qua} satisfies the
classical simplifying assumptions $B(\rho)$, ${CN}(\alpha)$ and
${DN}(\beta)$ (see \cite{hairerlw06gni}), and it is observed that we
also have $b_i\bar{B}_i=b_iB_i(1-c_i)$ for each $i=1,\ldots, s$.
Consequently, it gives rise to the order of the method by the
classical result \cite{hairerlw06gni,hairernw93sod}.
\end{proof}

\section{Geometric integration by csRKN methods}

In this section, we discuss the geometric integration by csRKN
methods. As pointed out in \cite{hairerlw06gni}, symplectic
integrators for Hamiltonian systems and symmetric integrators for
reversible systems play a central role in the geometric integration
of differential equations, for the reason that they possess
excellent numerical behaviors in long-time integration. So far,
there are many literatures concentrating on the theoretical analysis
and empirical study of these integrators, see
\cite{Benetting94oth,Feng84ods,Feng95kfc,Fengqq10sga,hairerlw06gni,
Leimkuhlerr04shd,sanzc94nhp} and references therein.

\subsection{Symplectic integrators}

A very important subclass of dynamical systems in classical and
non-classical mechanics are the so-called Hamiltonian systems
\cite{Arnold89mmo}, which read
\begin{equation}\label{Hs}
z'=J^{-1}\nabla_{z}H(z),\;\;z(t_0)=z_0\in\mathbb{R}^{2d},\;\;
z=\begin{pmatrix}
  p \\
  q \\
\end{pmatrix},\;\;
J=\begin{pmatrix}
0 & I_{d\times d} \\
-I_{d\times d} & 0 \\
\end{pmatrix},
\end{equation}
where $q\in\mathbb{R}^{d}$ represents the position coordinates,
$p\in\mathbb{R}^{d}$ the momentum coordinates, and $H$ the
Hamiltonian function (generally represents the total energy). Such
system possesses a symplectic structure (a characteristic property
of the system \cite{hairerlw06gni}), which means the phase flow
$\varphi_t$ satisfies \cite{Arnold89mmo}
\begin{equation*}
\dif\varphi_t(z_0)\wedge J\dif\varphi_t(z_0)=\dif z_0 \wedge J\dif
z_0,\quad \forall\,z_0\in D,
\end{equation*}
where $\wedge$ represents the wedge product, and $D$ is an open
subset in the phase space. For the sake of respecting such geometric
structure in numerical discretization, symplectic integrators are
suggested by some earlier scientists (see
\cite{Feng84ods,ruth83aci,Vogelaere56moi} and references therein),
the definition of which can be stated as follows.

\begin{defn}
A one-step method $\phi_h:
z_0=(p_0,\,q_0)\mapsto(p_{1},\,q_{1})=z_1$ is called symplectic if
and only if
$$\dif\phi_h(z_0)\wedge J\dif\phi_h(z_0)=\dif z_0 \wedge J\dif z_0,\quad \forall\,z_0\in D,$$
whenever the method is applied to a smooth Hamiltonian system.
\end{defn}

A class of Hamiltonian systems frequently encountered in practice is
the following
\begin{equation}\label{eq:first}
p'=-\nabla_q V(q),\;\; q'=Mp,
\end{equation}
with the Hamiltonian
$$H(z)=\frac{1}{2}p^TMp+V(q),$$ where $M$ is a constant symmetric
matrix, and $V(q)$ is a scalar function. This equations can also be
rewritten as a second-order system
\begin{equation}\label{eq:Hs}
q''=-M\nabla_q V(q).
\end{equation}

By using the notations $f(q)=-M\nabla_q V(q)$ and $g(q)=-\nabla_q
V(q)$, we propose the following csRKN method for solving
\eqref{eq:Hs}
\begin{subequations}\label{csRKN:Hs}
\begin{alignat}{2}
    \label{Heq:csrkn1}
&Q_\tau=q_0 +hC_\tau Mp_0 +h^2\int_I \bar{A}_{\tau, \sigma}w(\sigma)
f(Q_\sigma) \dif \sigma, \;\;\tau \in I, \\
    \label{Heq:csrkn2}
&q_{1}=q_0+ h Mp_0+h^2 \int_I \bar{B}_\tau w(\tau)  f(Q_\tau) \dif\tau, \\
    \label{Heq:csrkn3}
&p_1 = p_0 +h\int_I B_\tau w(\tau) g(Q_\tau) \dif\tau.
\end{alignat}
\end{subequations}
Remark that here we have removed the constant matrix $M$ from both
sides of \eqref{Heq:csrkn3} which will not affect the order of the
method. The following theorems have extended the corresponding
results previously presented in \cite{Tangsz18hos}.
\begin{thm}\label{symp_cond_ori}
If the coefficients of a csRKN method
(\ref{Heq:csrkn1}-\ref{Heq:csrkn3}) satisfy
\begin{subequations}
\begin{alignat}{2}
\label{sym_cond_orig01}
\bar{B}_\tau&=B_\tau(1-C_\tau),\quad\tau\in I,\\
\label{sym_cond_orig02}
B_\tau(\bar{B}_\sigma-\bar{A}_{\tau,\sigma})&=B_\sigma(\bar{B}_\tau
-\bar{A}_{\sigma,\tau}),\quad\tau,\sigma\in I,
\end{alignat}
\end{subequations}
then the method is symplectic for solving the system \eqref{eq:Hs}.
\end{thm}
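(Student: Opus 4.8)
The plan is to verify the defining relation $\dif\phi_h(z_0)\wedge J\dif\phi_h(z_0)=\dif z_0\wedge J\dif z_0$ directly. Writing $z=(p,q)$ one computes $\dif z\wedge J\dif z=2\,\dif p\wedge\dif q$, so it suffices to show that the map $(p_0,q_0)\mapsto(p_1,q_1)$ defined by \eqref{Heq:csrkn1}--\eqref{Heq:csrkn3} preserves the two-form $\dif p\wedge\dif q$, i.e.\ $\dif p_1\wedge\dif q_1=\dif p_0\wedge\dif q_0$. First I would differentiate the method with respect to the initial data, treating $\dif p_0,\dif q_0$ as the basic one-forms. Abbreviating $\dif G_\tau:=g'(Q_\tau)\,\dif Q_\tau$, where $g'(Q_\tau)=-\nabla^2 V(Q_\tau)$ is the (symmetric) Hessian, and using $f=Mg$ so that $f'(Q_\tau)\,\dif Q_\tau=M\,\dif G_\tau$ with $M$ symmetric, the variational equations read
\begin{equation*}
\dif Q_\tau=\dif q_0+hC_\tau M\dif p_0+h^2\!\int_I\bar{A}_{\tau,\sigma}w(\sigma)M\dif G_\sigma\,\dif\sigma,\quad \dif q_1=\dif q_0+hM\dif p_0+h^2\!\int_I\bar{B}_\tau w(\tau)M\dif G_\tau\,\dif\tau,
\end{equation*}
together with $\dif p_1=\dif p_0+h\int_I B_\tau w(\tau)\dif G_\tau\,\dif\tau$.

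The two algebraic facts I would use throughout are that, for a symmetric matrix $S$ and vector-valued one-forms $u,v$, one has $u\wedge Sv=(Su)\wedge v$ and hence $u\wedge Su=0$. Expanding $\dif p_1\wedge\dif q_1$ then produces, besides the desired leading term $\dif p_0\wedge\dif q_0$, a collection of cross terms. The term $\dif p_0\wedge hM\dif p_0$ vanishes because $M$ is symmetric, and the ``internal self-term'' $\dif G_\tau\wedge\dif Q_\tau$ vanishes because the Hessian $g'(Q_\tau)$ is symmetric. What remain are (i) terms linear in $\dif p_0$ and (ii) a double-integral term quadratic in the $\dif G$'s.

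To dispose of (i) I would use the internal-stage identity to substitute $\dif q_0=\dif Q_\tau-hC_\tau M\dif p_0-h^2\int_I\bar{A}_{\tau,\sigma}w(\sigma)M\dif G_\sigma\,\dif\sigma$ (valid for each $\tau$) inside the term $h\int_I B_\tau w(\tau)\dif G_\tau\,\dif\tau\wedge\dif q_0$; after the Hessian symmetry kills $\dif G_\tau\wedge\dif Q_\tau$, the surviving $\dif p_0$-linear contributions collect with coefficient $\big(B_\tau-\bar{B}_\tau-B_\tau C_\tau\big)w(\tau)$, which is precisely zero by condition \eqref{sym_cond_orig01}, $\bar{B}_\tau=B_\tau(1-C_\tau)$. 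The leftover quadratic part is
\begin{equation*}
h^3\!\int_I\!\!\int_I w(\tau)w(\sigma)\,B_\tau\big(\bar{B}_\sigma-\bar{A}_{\tau,\sigma}\big)\,\dif G_\tau\wedge M\dif G_\sigma\,\dif\tau\,\dif\sigma.
\end{equation*}
Relabeling $\tau\leftrightarrow\sigma$ and invoking the antisymmetry $\dif G_\sigma\wedge M\dif G_\tau=-\dif G_\tau\wedge M\dif G_\sigma$ (again from symmetry of $M$) shows this integral equals its own negative with coefficient $B_\sigma\big(\bar{B}_\tau-\bar{A}_{\sigma,\tau}\big)$; averaging the two expressions and applying condition \eqref{sym_cond_orig02} makes the symmetrized integrand vanish identically, so the quadratic term is zero.

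The main obstacle is this last double-integral symmetrization: everything else is essentially bookkeeping driven by the symmetry of $M$ and of the Hessian, whereas condition \eqref{sym_cond_orig02} is exactly what is needed to cancel the genuinely two-dimensional (in $\tau,\sigma$) contribution. A minor technical point I would address is the legitimacy of differentiating under the integral sign and of exchanging the order of the iterated integrals when $I$ is infinite; this is covered by the uniform-convergence hypothesis recorded in the Remark following Definition \ref{csRKN:def}. Collecting all the cancellations yields $\dif p_1\wedge\dif q_1=\dif p_0\wedge\dif q_0$, which is the claimed symplecticity.
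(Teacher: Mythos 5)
Your proof is correct and is essentially the paper's own argument: the paper simply defers to Theorem 4.2 of \cite{Tangsz18hos}, whose proof is exactly this wedge-product verification of $\dif p_1\wedge \dif q_1=\dif p_0\wedge \dif q_0$, here carried out over the general interval $I$ with weight $w$. Your two cancellations --- the $\dif p_0$-linear terms killed by $\bar{B}_\tau=B_\tau(1-C_\tau)$ after substituting the stage relation, and the symmetrized double integral killed by \eqref{sym_cond_orig02} via the antisymmetry $\dif G_\sigma\wedge M\dif G_\tau=-\dif G_\tau\wedge M\dif G_\sigma$ --- are precisely the mechanism of the cited proof, so nothing is missing.
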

\begin{proof}
The proof is very the same as that of Theorem 4.2 in
\cite{Tangsz18hos} with the range of integration replaced by a
general interval $I$.
\end{proof}
\begin{rem}
Theorem \ref{symp_cond_ori} implies that the symplecticity of the
csRKN methods is independent of its weight function.
\end{rem}
\begin{thm}\label{constr_symcsRKN}
Suppose that $C_\tau=\tau$ and $\bar{A}_{\tau,\sigma}/B_\sigma\in
L_w^2(I\times I)$, then the symplectic condition given in Theorem
\ref{symp_cond_ori} is equivalent to the fact that $\bar{B}_\tau$
and $\bar{A}_{\tau,\sigma}$ have the following form in terms of the
normalized orthogonal polynomials $P_n(x)$ in $L_w^2(I)$
\begin{equation}\label{sym_cond}
\begin{split}
\bar{B}_\tau&=B_\tau(1-\tau),\quad\tau\in I,\\
\bar{A}_{\tau,\sigma}&=B_\sigma\Big(\alpha_{(0,0)}+\alpha_{(0,1)}P_1(\sigma)
+\alpha_{(1,0)}P_1(\tau)+\sum\limits_{i+j>1}\alpha_{(i,j)}
P_i(\tau)P_j(\sigma)\Big),\quad\tau,\sigma\in I,
\end{split}
\end{equation}
where $\alpha_{(0,0)}$ is an arbitrary real number,
$\alpha_{(0,1)}-\alpha_{(1,0)}=-\big<x,\,P_1(x)\big>_w$ (see
\eqref{w_ip}), and the parameters $\alpha_{(i,j)}$ are symmetric,
i.e., $\alpha_{(i,j)}=\alpha_{(j,i)}$ for $\forall\,i+j>1$.
\end{thm}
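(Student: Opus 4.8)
The plan is to characterize the symplectic condition from Theorem~\ref{symp_cond_ori} as a statement about orthogonal expansions, and the natural strategy is to expand $\bar A_{\tau,\sigma}/B_\sigma$ in the double orthogonal basis $\{P_i(\tau)P_j(\sigma)\}$ of $L^2_w(I\times I)$ and decode conditions \eqref{sym_cond_orig01}--\eqref{sym_cond_orig02} coefficient-by-coefficient. First I would dispose of \eqref{sym_cond_orig01}: with $C_\tau=\tau$ it becomes $\bar B_\tau=B_\tau(1-\tau)$ verbatim, giving the first line of \eqref{sym_cond} immediately. The substance is therefore condition \eqref{sym_cond_orig02}, which, after dividing by $B_\tau B_\sigma$ (legitimate since the hypothesis $\bar A_{\tau,\sigma}/B_\sigma\in L^2_w(I\times I)$ lets us work with $a_{\tau,\sigma}:=\bar A_{\tau,\sigma}/B_\sigma$), I would rewrite using $\bar B_\tau=B_\tau(1-\tau)$ as a \emph{symmetry-plus-skew} relation on $a$. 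Concretely, \eqref{sym_cond_orig02} reads $B_\tau\bar B_\sigma-B_\tau\bar A_{\tau,\sigma}=B_\sigma\bar B_\tau-B_\sigma\bar A_{\sigma,\tau}$; substituting the product forms turns the $\bar B$ terms into $B_\tau B_\sigma(1-\sigma)$ and $B_\sigma B_\tau(1-\tau)$, so the whole identity collapses (after cancelling the common factor $B_\tau B_\sigma$) to
\begin{equation*}
a_{\tau,\sigma}-a_{\sigma,\tau}=(1-\sigma)-(1-\tau)=\tau-\sigma,
\end{equation*}
an equivalent, weight-free reformulation of symplecticity that is the crux of the argument.

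Next I would expand $a_{\tau,\sigma}=\sum_{i,j\ge0}\alpha_{(i,j)}P_i(\tau)P_j(\sigma)$ and read off what the displayed skew-symmetry relation demands of the coefficients. Swapping $\tau\leftrightarrow\sigma$ in the expansion shows $a_{\tau,\sigma}-a_{\sigma,\tau}=\sum_{i,j}(\alpha_{(i,j)}-\alpha_{(j,i)})P_i(\tau)P_j(\sigma)$, so the antisymmetric part of the coefficient array must reproduce $\tau-\sigma$. Writing $\tau=\sum_k\langle x,P_k\rangle_w P_k(\tau)$ (and using $P_0\equiv$ const together with normalization $\langle P_i,P_j\rangle_w=\delta_{ij}$) I would express $\tau-\sigma$ in the basis $\{P_i(\tau)P_j(\sigma)\}$: it lives entirely in the ``first-order'' block, contributing to the $(0,k)$ and $(k,0)$ slots via the coefficients $\langle x,P_k\rangle_w$. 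By the completeness and uniqueness of the double expansion, matching coefficients forces $\alpha_{(i,j)}=\alpha_{(j,i)}$ for every index pair \emph{except} where the right-hand side $\tau-\sigma$ intervenes, which is precisely the block $i+j\le1$. The diagonal $(0,0)$ term cancels in the difference and is hence unconstrained, explaining why $\alpha_{(0,0)}$ is arbitrary; the off-diagonal first-order slots yield exactly one scalar constraint.

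The main obstacle, and where I would spend the most care, is the bookkeeping in the low-order block $i+j\le1$, because $P_1$ need not equal $x$ for a general weighted basis (only for the Legendre case $w\equiv1$ on $[0,1]$). I would compute $\langle x,P_1\rangle_w$ explicitly and track how $x$ projects onto $P_0$ and $P_1$; the relation $\tau-\sigma$ then imposes $(\alpha_{(1,0)}-\alpha_{(0,1)})P_1(\tau)\cdot\text{const}+\cdots=\tau-\sigma$, which after isolating the $P_1(\tau)P_0(\sigma)$ and $P_0(\tau)P_1(\sigma)$ components gives the single scalar equation $\alpha_{(0,1)}-\alpha_{(1,0)}=-\langle x,P_1\rangle_w$ recorded in the statement. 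For all higher indices $i+j>1$ the right-hand side contributes nothing, so skew-symmetry of $\tau-\sigma$ forces full symmetry $\alpha_{(i,j)}=\alpha_{(j,i)}$ there, completing the ``$\Rightarrow$'' direction. The converse is then a direct verification: substituting the prescribed form \eqref{sym_cond} back into the skew relation $a_{\tau,\sigma}-a_{\sigma,\tau}=\tau-\sigma$, the symmetric bulk cancels identically and the two first-order terms reconstruct $\tau-\sigma$ by the constraint on $\alpha_{(0,1)}-\alpha_{(1,0)}$, so \eqref{sym_cond_orig02} holds; together with $\bar B_\tau=B_\tau(1-\tau)$ this recovers the full symplectic condition, establishing equivalence.
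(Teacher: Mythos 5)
Your proposal is correct and follows essentially the same route as the paper's proof: you reduce the symplectic conditions (with $\bar{B}_\tau=B_\tau(1-\tau)$) to the weight-free skew relation $\bar{A}_{\tau,\sigma}/B_\sigma-\bar{A}_{\sigma,\tau}/B_\tau=\tau-\sigma$, exactly the paper's \eqref{reduced_symp}--\eqref{eq:AB0}, then expand $\bar{A}_{\tau,\sigma}/B_\sigma$ in the double basis $\{P_i(\tau)P_j(\sigma)\}$ and match coefficients, including the same careful treatment of the low-order block where $\tau-\sigma=\big<x,P_1\big>_w\big(P_1(\tau)-P_1(\sigma)\big)$ because the $P_0$ contributions cancel. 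Your explicit check of the converse by back-substitution is slightly more detailed than the paper's, which leaves that direction implicit in the uniqueness of the expansion, but the argument is the same.
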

\begin{proof}
On account of $C_\tau=\tau$, we have
$$\bar{B}_\tau=B_\tau(1-\tau),$$
inserting it into \eqref{sym_cond_orig02}, then it yields
\begin{equation}\label{reduced_symp}
B_\tau\bar{A}_{\tau,\,\sigma}-B_\sigma\bar{A}_{\sigma,\,\tau}=B_\tau
B_\sigma(\tau-\sigma),
\end{equation}
which leads to
\begin{equation}\label{eq:AB0}
\frac{\bar{A}_{\tau,\,\sigma}}{B_\sigma}-\frac{\bar{A}_{\sigma,\,\tau}}{B_\tau}=
\tau-\sigma.
\end{equation}
Here we assume $B_\tau\neq0$, otherwise the csRKN method will be not
practical for possessing no order accuracy. With the help of
$\tau=\sum^1_{i=0}\big<x,\,P_i(x)\big>_w P_i(\tau)$ and notice that
$P_0(\tau)=P_0(\sigma)=constant$, \eqref{eq:AB0} becomes
\begin{equation}\label{eq:AB}
\begin{split}
\frac{\bar{A}_{\tau,\,\sigma}}{B_\sigma}-\frac{\bar{A}_{\sigma,\,\tau}}{B_\tau}&=
\sum^1_{i=0}\big<x,\,P_i(x)\big>_w
P_i(\tau)-\sum^1_{i=0}\big<x,\,P_i(x)\big>_w P_i(\sigma),\\
&=\big<x,\,P_1(x)\big>_w \big(P_1(\tau)-P_1(\sigma)\big).
\end{split}
\end{equation}

Next, consider the expansion of $\bar{A}_{\tau,\,\sigma}/B_\sigma$
along the normalized orthogonal basis
$\left\{P_i(\tau)P_j(\sigma)\right\}_{i,j=0}^\infty$ of
$L^2_w(I\times I)$
\begin{equation}\label{AdivB}
\bar{A}_{\tau,\,\sigma}/B_\sigma=\alpha_{(0,0)}+\alpha_{(0,1)}P_1(\sigma)
+\alpha_{(1,0)}P_1(\tau)+\sum\limits_{i+j>1}\alpha_{(i,j)}
P_i(\tau)P_j(\sigma),\quad\alpha_{(i,j)}\in\mathbb{R}.
\end{equation}
By exchanging $\tau$ and $\sigma$ it gives
\begin{equation*}
\bar{A}_{\sigma,\,\tau}/B_\tau=\alpha_{(0,0)}+\alpha_{(0,1)}P_1(\tau)
+\alpha_{(1,0)}P_1(\sigma)+\sum\limits_{i+j>1}\alpha_{(j,i)}
P_j(\sigma)P_i(\tau),
\end{equation*}
where we have interchanged the indexes $i$ and $j$. By substituting
the above two expressions into \eqref{eq:AB}, it yields
\begin{equation}\label{coef_Ats}
\alpha_{(0,0)}\in\mathbb{R},\;\,\alpha_{(0,1)}-\alpha_{(1,0)}=-\big<x,\,P_1(x)\big>_w,
\;\,\alpha_{(i,j)}=\alpha_{(j,i)},\;\forall\,i+j>1,
\end{equation}
which completes the proof by using \eqref{AdivB}.
\end{proof}

\begin{thm}\label{sym_quad}
If the coefficients of a csRKN method
(\ref{Heq:csrkn1}-\ref{Heq:csrkn3}) satisfies the symplectic
conditions (\ref{sym_cond_orig01}-\ref{sym_cond_orig02}), then the
RKN method \eqref{RKN:qua} derived by using the quadrature formula
\eqref{wquad} is always symplectic.
\end{thm}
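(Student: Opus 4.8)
The plan is to read off the coefficients of the discrete RKN tableau \eqref{RKN:qua} and to verify that they fulfil the classical symplecticity conditions for RKN methods; these conditions will turn out to be exactly the pointwise relations \eqref{sym_cond_orig01}--\eqref{sym_cond_orig02} sampled at the quadrature nodes $c_1,\dots,c_s$.

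First I would recall from \cite{hairerlw06gni,sanzc94nhp} that an $s$-stage RKN method with internal coefficients $\mathfrak{a}_{ij}$, position weights $\bar{\mathfrak{b}}_i$, velocity weights $\mathfrak{b}_i$ and abscissae $c_i$ is symplectic whenever
\begin{equation*}
\bar{\mathfrak{b}}_i=\mathfrak{b}_i(1-c_i),\qquad
\mathfrak{b}_i\big(\bar{\mathfrak{b}}_j-\mathfrak{a}_{ij}\big)
=\mathfrak{b}_j\big(\bar{\mathfrak{b}}_i-\mathfrak{a}_{ji}\big),\qquad i,j=1,\dots,s,
\end{equation*}
which are manifestly the discrete images of \eqref{sym_cond_orig01}--\eqref{sym_cond_orig02}. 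Reading \eqref{RKN:qua}, the method under consideration has $\mathfrak{a}_{ij}=b_j\bar{A}_{ij}$, $\bar{\mathfrak{b}}_i=b_i\bar{B}_i$ and $\mathfrak{b}_i=b_iB_i$, with $B_i=B_{c_i}$, $\bar{B}_i=\bar{B}_{c_i}$, $\bar{A}_{ij}=\bar{A}_{c_i,c_j}$ and $c_i=C_{c_i}$ (since $C_\tau=\tau$).

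The verification is then a direct substitution. The first condition reads $b_i\bar{B}_i=b_iB_i(1-c_i)$, which is $b_i$ times \eqref{sym_cond_orig01} evaluated at $\tau=c_i$. The second reads
\begin{equation*}
(b_iB_i)\big(b_j\bar{B}_j-b_j\bar{A}_{ij}\big)=(b_jB_j)\big(b_i\bar{B}_i-b_i\bar{A}_{ji}\big),
\end{equation*}
that is, $b_ib_j$ times the identity $B_{c_i}\big(\bar{B}_{c_j}-\bar{A}_{c_i,c_j}\big)=B_{c_j}\big(\bar{B}_{c_i}-\bar{A}_{c_j,c_i}\big)$, which is precisely \eqref{sym_cond_orig02} at $\tau=c_i,\sigma=c_j$. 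Since \eqref{sym_cond_orig01}--\eqref{sym_cond_orig02} are assumed to hold for \emph{all} $\tau,\sigma\in I$, they hold at the nodes in particular, so both discrete conditions are satisfied and \eqref{RKN:qua} is symplectic.

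I do not anticipate a genuine obstacle; the single point requiring care is the bookkeeping of the quadrature weights. The weight $b_j$ is absorbed into the internal coefficient $\mathfrak{a}_{ij}=b_j\bar{A}_{ij}$, so the factor multiplying the sampled form of \eqref{sym_cond_orig02} is the product $b_ib_j$, which cancels identically on both sides. Because the continuous-stage conditions are pointwise identities rather than integral relations, the argument needs no nondegeneracy of the weights and is insensitive to the particular choice of quadrature \eqref{wquad}; this is exactly what underlies the word \emph{always} in the statement.
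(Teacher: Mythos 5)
Your proof is correct and is essentially the argument the paper intends: the paper's own ``proof'' is only a pointer to Theorem 4.1 of \cite{Tangz18spc}, where the same direct verification is carried out --- read off the RKN coefficients $b_j\bar{A}_{ij}$, $b_i\bar{B}_i$, $b_iB_i$ from \eqref{RKN:qua}, invoke the classical (Suris-type) sufficient symplecticity conditions for RKN methods from \cite{hairerlw06gni,sanzc94nhp}, and observe that the quadrature weights cancel as the common factor $b_ib_j$. Since \eqref{sym_cond_orig01}--\eqref{sym_cond_orig02} are pointwise identities on $I$, sampling them at the nodes $c_i$ is all that is needed, exactly as you argue.
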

\begin{proof}
Please refer to Theorem 4.1 of \cite{Tangz18spc} for a similar
proof.
\end{proof}

\begin{thm}\label{sym_design}
With the hypothesis $C_\tau=\tau$, for a symplectic csRKN method
with coefficients satisfying
(\ref{sym_cond_orig01}-\ref{sym_cond_orig02}), we have the following
statements:
\begin{itemize}
\item[\emph{(a)}] $\mathcal{B}(\xi)$ and $\mathcal{CN}(\eta)$
$\Longrightarrow$ $\mathcal{DN}(\zeta)$, where
$\zeta=\min\{\xi,\,\eta\}$;
\item[\emph{(b)}] $\mathcal{B}(\xi)$ and $\mathcal{DN}(\zeta)$
$\Longrightarrow$ $\mathcal{CN}(\eta)$, where
$\eta=\min\{\xi,\,\zeta\}$.
\end{itemize}
\end{thm}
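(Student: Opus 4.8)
The plan is to reduce everything to the polynomial reformulations of the three assumptions supplied by Lemma \ref{lem_assum}, namely \eqref{eq:cd1}--\eqref{eq:cd3}, and to bring in the symplecticity hypothesis only through the reduced identity \eqref{reduced_symp}, $B_\tau\bar{A}_{\tau,\sigma}-B_\sigma\bar{A}_{\sigma,\tau}=B_\tau B_\sigma(\tau-\sigma)$, which was established in the proof of Theorem \ref{constr_symcsRKN} (where $B_\tau\neq0$ is assumed). The essential observation is that \eqref{reduced_symp} allows me to interchange the two indices of $\bar{A}$ at the cost of the explicit polynomial correction $B_\tau B_\sigma(\tau-\sigma)$. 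Since $\mathcal{CN}$ and $\mathcal{DN}$ each control $\bar{A}$ integrated against one of its two indices, swapping indices is exactly what turns one assumption into the other, while $\mathcal{B}(\xi)$ absorbs the correction term. Because that correction raises the polynomial degree by one, $\mathcal{B}$ is consumed one order earlier than its companion assumption, which is the source of the $\min\{\xi,\cdot\}$ in the stated orders.

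For part (a), I would start from the left-hand side of \eqref{eq:cd3}, $\int_I B_\tau\bar{A}_{\tau,\sigma}w(\tau)\phi(\tau)\,\dif\tau$, substitute $B_\tau\bar{A}_{\tau,\sigma}=B_\sigma\bar{A}_{\sigma,\tau}+B_\tau B_\sigma(\tau-\sigma)$ from \eqref{reduced_symp}, and split the integral. The first piece, $B_\sigma\int_I\bar{A}_{\sigma,\tau}w(\tau)\phi(\tau)\,\dif\tau$, is evaluated by $\mathcal{CN}(\eta)$ in the form \eqref{eq:cd2} (with $\sigma$ the fixed first index and $\tau$ the integration variable), giving $B_\sigma\int_0^\sigma\!\int_0^\alpha\phi\,\dif x\,\dif\alpha$ for $\deg\phi\le\eta-2$. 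The second piece, $B_\sigma\int_I B_\tau(\tau-\sigma)\phi(\tau)w(\tau)\,\dif\tau$, is a $\mathcal{B}(\xi)$ evaluation applied to the degree-$(\deg\phi+1)$ polynomial $(\tau-\sigma)\phi(\tau)$, giving $B_\sigma\int_0^1(x-\sigma)\phi(x)\,\dif x$ for $\deg\phi\le\xi-2$. It then remains to check that the sum matches the right-hand side of \eqref{eq:cd3}, which collapses to the elementary identity $\int_0^\sigma\!\int_0^\alpha\phi\,\dif x\,\dif\alpha-\sigma\int_0^1\phi\,\dif x=\int_0^\sigma\!\int_1^\alpha\phi\,\dif x\,\dif\alpha$ (obtained from $\int_1^\alpha=\int_0^\alpha-\int_0^1$). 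Both evaluations are simultaneously valid precisely for $\deg\phi\le\min\{\xi,\eta\}-2=\zeta-2$, which is the claimed range.

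Part (b) is the mirror image. I would begin from the left-hand side of \eqref{eq:cd2}, $\int_I\bar{A}_{\tau,\sigma}w(\sigma)\phi(\sigma)\,\dif\sigma$, and now solve \eqref{reduced_symp} for $\bar{A}_{\tau,\sigma}$ itself, $\bar{A}_{\tau,\sigma}=\tfrac{B_\sigma}{B_\tau}\bar{A}_{\sigma,\tau}+B_\sigma(\tau-\sigma)$. The term $\tfrac{1}{B_\tau}\int_I B_\sigma\bar{A}_{\sigma,\tau}w(\sigma)\phi(\sigma)\,\dif\sigma$ is handled by $\mathcal{DN}(\zeta)$ in the form \eqref{eq:cd3} (with $\tau$ the fixed index and $\sigma$ the integration variable); crucially the factor $B_\tau$ produced on the right of \eqref{eq:cd3} cancels the $1/B_\tau$ in front, leaving $\int_0^\tau\!\int_1^\alpha\phi\,\dif x\,\dif\alpha+\int_0^1 x\phi\,\dif x$ for $\deg\phi\le\zeta-2$. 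The remaining term $\int_I B_\sigma(\tau-\sigma)\phi(\sigma)w(\sigma)\,\dif\sigma$ is again a $\mathcal{B}(\xi)$ evaluation, valid for $\deg\phi\le\xi-2$, giving $\int_0^1(\tau-x)\phi\,\dif x$. Summing and simplifying via $\int_0^1 x\phi\,\dif x+\int_0^1(\tau-x)\phi\,\dif x=\tau\int_0^1\phi\,\dif x$ together with the same $\int_1^\alpha=\int_0^\alpha-\int_0^1$ identity reduces the expression to $\int_0^\tau\!\int_0^\alpha\phi\,\dif x\,\dif\alpha$, i.e.\ \eqref{eq:cd2}, over the range $\deg\phi\le\min\{\xi,\zeta\}-2=\eta-2$.

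I expect the delicate points to be organizational rather than deep: keeping the degree bookkeeping honest (the $+1$ from the factor $\tau-\sigma$ is what forces $\min\{\xi,\cdot\}$ instead of $\xi$), correctly tracking which index of $\bar{A}$ is fixed versus integrated when invoking \eqref{eq:cd2} and \eqref{eq:cd3} after the swap, and justifying the division by $B_\tau$ in part (b)—legitimate under the standing assumption $B_\tau\neq0$ already used in Theorem \ref{constr_symcsRKN}. The iterated-integral manipulations are routine once $\int_1^\alpha=\int_0^\alpha-\int_0^1$ is applied, so no genuine analytic obstacle arises.
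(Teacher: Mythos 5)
Your proposal is correct, and it is essentially the paper's own argument: both rest on the reduced symplecticity identity \eqref{reduced_symp}, integrate it against a polynomial (the paper uses monomials $\sigma^{\kappa-1}$, you use general $\phi$ via Lemma \ref{lem_assum}), evaluate two of the three terms by $\mathcal{CN}$ (resp.\ $\mathcal{DN}$) together with $\mathcal{B}$ --- whose consumption of one extra degree from the factor $\tau-\sigma$ produces the $\min\{\xi,\cdot\}$ --- and isolate the remaining term as the target assumption. Your write-up merely makes explicit the part~(b) details (including the division by $B_\tau\neq0$) that the paper dismisses as ``similar.''
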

\begin{proof}
Here we only provide the proof of (a), as (b) can be proved in a
similar manner. From the proof of Theorem \ref{constr_symcsRKN}, we
have get the formula \eqref{reduced_symp} using the hypothesis
$C_\tau=\tau$. Based on this, by multiplying $\sigma^{\kappa-1}$
from both sides of \eqref{reduced_symp} and taking integral it gives
\begin{equation}\label{symp_assum}
B_\tau\int_I\bar{A}_{\tau,\sigma}\sigma^{\kappa-1}\,\dif
\sigma-\int_IB_\sigma\sigma^{\kappa-1} \bar{A}_{\sigma,\tau}\,\dif
\sigma= B_\tau \int_IB_\sigma\sigma^{\kappa-1}(\tau-\sigma)\,\dif
\sigma,\;\;\; \kappa=1,2,\cdots,\zeta-1.
\end{equation}
Now let $\zeta=\min\{\xi,\,\eta\}$, and then $\mathcal{B}(\zeta)$
and $\mathcal{CN}(\zeta)$ can be used for calculating the integrals
of \eqref{symp_assum}. As a result, we have
\begin{equation*}
B_\tau\frac{\tau^{\kappa+1}}{\kappa(\kappa+1)}-\int_IB_\sigma\sigma^{\kappa-1}
\bar{A}_{\sigma,\tau}\,\dif \sigma= \frac{B_\tau
\tau}{\kappa}-\frac{B_\tau}{\kappa+1},\;\;\;
\kappa=1,2,\cdots,\zeta-1.
\end{equation*}
Recall that $C_\tau=\tau$, it gives rise to
\begin{equation*}
\int_IB_\sigma\sigma^{\kappa-1} \bar{A}_{\sigma,\tau}\,\dif \sigma=
\frac{B_\tau C_\tau^{\kappa+1}}{\kappa(\kappa+1)}-\frac{B_\tau
C_\tau}{\kappa} +\frac{B_\tau}{\kappa+1},\;\;\;
\kappa=1,2,\cdots,\zeta-1.
\end{equation*}
Finally, by exchanging $\tau\leftrightarrow\sigma$ in the formula
above, it gives $\mathcal{DN}(\zeta)$ with
$\zeta=\min\{\xi,\,\eta\}$.
\end{proof}
\begin{rem}
A counterpart result for classical symplectic RKN methods can be
similarly obtained.
\end{rem}

On the basis of these preliminaries, following the same idea of
\cite{Tang18siw}, we introduce an operational \emph{procedure} for
deriving symplectic RKN-type integrators:\\
\textbf{Step 1.} Let $C_\tau=\tau,\,\bar{B}_\tau=B_\tau(1-C_\tau)$
and make an ansatz for $B_\tau$  by using \eqref{Bt} so as to
satisfy $B(\xi)$. Note that a finite number of parameters, say
$\lambda_\iota$, could be kept as free parameters;\\
\textbf{Step 2.} Suppose $\bar{A}_{\tau,\,\sigma}$ is in the form
(by Theorem \ref{constr_symcsRKN}, a truncation is needed)
\begin{equation}\label{Ats_proc}
\bar{A}_{\tau,\sigma}=B_\sigma\Big(\alpha_{(0,0)}+\alpha_{(0,1)}P_1(\sigma)
+\alpha_{(1,0)}P_1(\tau)+\sum\limits_{i+j>1}\alpha_{(i,j)}
P_i(\tau)P_j(\sigma)\Big),
\end{equation}
where the parameters $\alpha_{(i,j)}$ satisfy \eqref{coef_Ats}, and
then substitute $\bar{A}_{\tau,\,\sigma}$ into\footnote{An
alternative technique is to consider using $\mathcal{DN}(\zeta)$.}
$\mathcal{CN}(\eta)$ (usually let $\eta<\xi$) for determining
$\alpha_{(i,j)}$:
\begin{equation*}
\int_I\bar{A}_{\tau,\,\sigma}w(\sigma)\phi_k(\sigma)\,\dif
\sigma=\int_0^\tau \int_0^\alpha\phi_k(x)\,\dif
x\,\dif\alpha,\;\;k=0,1,\cdots,\eta-2,
\end{equation*}
Here, $\phi_k(x)$ stands for any polynomial of degree $k$,  which
performs very similarly as the
``test function" used in general finite element analysis; \\
\textbf{Step 3.} Write down $B_\tau,\,\bar{B}_\tau$ and
$\bar{A}_{\tau,\,\sigma}$ (satisfy $\mathcal{B}(\xi)$ and
$\mathcal{CN}(\eta)$ automatically), which results in a symplectic
csRKN method of order at least
$\min\{\xi,\,2\eta+2,\,\eta+\zeta\}=\min\{\xi,\,\eta+\zeta\}$ with
$\zeta=\min\{\xi,\,\eta\}$ by Theorem \ref{ord_csRKN} and
\ref{sym_design}. If needed, we then acquire symplectic RKN methods
by using quadrature rules (see Theorem \ref{sym_quad}).

The procedure above gives a general framework for deriving
symplectic integrators. In view of Theorem \ref{qua:csRKN} and
\ref{sym_design}, it is suggested to design Butcher coefficients
with low-degree $\bar{A}_{\tau,\,\sigma}$ and $B_\tau$, and $\eta$
is better to take as $\eta\approx\frac{1}{2}\xi$. Besides, for the
sake of conveniently computing those integrals of
$\mathcal{CN}(\eta)$ in the second step, the following ansatz may be
advisable (let $\rho\geq\eta$ and $\xi\geq2\eta-1$)
\begin{equation}\label{symBA}
\begin{split}
C_\tau&=\tau,\quad
B_\tau=\sum\limits_{j=0}^{\xi-1}\int_0^1P_j(x)\,\dif x P_j(\tau),
\quad\bar{B}_\tau=B_\tau(1-\tau),\\
\bar{A}_{\tau,\sigma}&=B_\sigma\Big(\alpha_{(0,0)}+\alpha_{(0,1)}P_1(\sigma)
+\alpha_{(1,0)}P_1(\tau)+\sum_{1<i+j\in \mathbb{Z}\atop
i\leq\rho,\,j\leq \xi-\eta+1}\alpha_{(i,j)}
P_i(\tau)P_j(\sigma)\Big),
\end{split}
\end{equation}
where $\alpha_{(0,1)}-\alpha_{(1,0)}=-\big<x,\,P_1(x)\big>_w,
\;\alpha_{(i,j)}=\alpha_{(j,i)},\;i+j>1$. Because of the index $j$
restricted by $j\leq \xi-\eta+1$ in \eqref{symBA}, we can use
$\mathcal{B}(\xi)$ to arrive at (please c.f. \eqref{eq:cd1})
\begin{equation*}
\begin{split}
&\int_I\bar{A}_{\tau,\,\sigma}w(\sigma)\phi_k(\sigma)\,\dif
\sigma\\
&=\int_IB_\sigma\Big(\alpha_{(0,0)}+\alpha_{(0,1)}P_1(\sigma)
+\alpha_{(1,0)}P_1(\tau)+\sum_{1<i+j\in \mathbb{Z}\atop
i\leq\rho,\,j\leq \xi-\eta+1}\alpha_{(i,j)}
P_i(\tau)P_j(\sigma)\Big)w(\sigma)\phi_k(\sigma)\,\dif \sigma\\
&=\big(\alpha_{(0,0)}+\alpha_{(1,0)}P_1(\tau)\big)\int_0^1
\phi_k(x)\,\dif x+\alpha_{(0,1)}\int_0^1 P_1(x)\phi_k(x)\,\dif
x\\
&\;\;\;+\sum_{1<i+j\in \mathbb{Z}\atop i\leq\rho,\,j\leq
\xi-\eta+1}\alpha_{(i,j)} P_i(\tau)\int_0^1P_j(x)\phi_k(x)\,\dif
x,\quad0\leq k\leq\eta-2.
\end{split}
\end{equation*}
Therefore, $\mathcal{CN}(\eta)$ implies that
\begin{equation}\label{symp_eqs}
\begin{split}
&\big(\alpha_{(0,0)}+\alpha_{(1,0)}P_1(\tau)\big)\int_0^1
\phi_k(x)\,\dif x+\alpha_{(0,1)}\int_0^1 P_1(x)\phi_k(x)\,\dif
x\\
&+\sum_{1<i+j\in \mathbb{Z}\atop i\leq\rho,\,j\leq
\xi-\eta+1}\alpha_{(i,j)}P_i(\tau)\int_0^1P_j(x)\phi_k(x)\,\dif
x=\int_0^\tau \int_0^\alpha\phi_k(x)\,\dif x\,\dif\alpha,\quad0\leq
k\leq\eta-2.
\end{split}
\end{equation}
where $\alpha_{(0,1)}-\alpha_{(1,0)}=-\big<x,\,P_1(x)\big>_w,
\;\alpha_{(i,j)}=\alpha_{(j,i)},\;i+j>1$. Finally, it needs to
settle $\alpha_{(i,j)}$ by transposing, comparing or merging similar
items of \eqref{symp_eqs} after the polynomial on right-hand side of
\eqref{symp_eqs} being represented by the basis
$\{P_j(\tau)\}_{j=0}^\infty$. In view of the symmetry of
$\alpha_{(i,j)}$, if we let $r=\min\{\rho,\xi-\eta+1\}$, then
actually the number of degrees of freedom of these parameters is
$(r+1)(r+2)/2$, by noticing that
\begin{equation*}
\alpha_{(i,j)}=0, \;\;\text{for}\;i>r\;\text{or}\;j>r.
\end{equation*}

\subsection{Symmetric integrators}

Theoretical analyses and a large number of numerical tests indicate
that symmetric integrators applied to (near-)integrable reversible
systems share similar properties to symplectic integrators applied
to (near-)integrable Hamiltonian systems: linear error growth,
near-conservation of first integrals, existence of invariant tori
\cite{hairerlw06gni}. The good long-time behavior of symmetric
integrators motivates us to find more new integrators.

\begin{defn}\cite{hairerlw06gni}
A numerical one-step method $\phi_h$ is called symmetric (or
time-reversible) if it satisfies
$$\phi^*_h=\phi_h,$$
where $\phi^*_h=\phi^{-1}_{-h}$ is referred to as the adjoint method
of $\phi_h$.
\end{defn}
\begin{rem}
Symmetry implies that the original method and the adjoint method
give identical numerical results. A well-known property of symmetric
integrators is that they possess an \emph{even order}
\cite{hairerlw06gni}. By the definition, a one-step method
$z_1=\phi_h(z_0; t_0,t_1)$ is symmetric if exchanging
$h\leftrightarrow -h$, $z_0\leftrightarrow z_1$ and
$t_0\leftrightarrow t_1$ leaves the original method unaltered.
\end{rem}

In order to derive symmetric integrators, we assume the interval $I$
to be the following two cases:
\begin{itemize}
  \item[(i)]  $I=[a,b]$ (finite interval) with $a+b=1$;
  \item[(ii)] $I=(-\infty,+\infty)$ (infinite interval).
\end{itemize}
In what follows, we first establish the adjoint method of a given
csRKN method. From (\ref{eq:csrkn1}-\ref{eq:csrkn3}), by
interchanging $t_0, q_0, q'_0, h$ with $t_1, q_1, q'_1, -h$,
respectively, we have
\begin{subequations}
    \begin{alignat}{3}
     \label{eq:rever1}
&Q_\tau=q_1 -hC_\tau q'_1 +h^2\int_I \bar{A}_{\tau, \sigma}w(\sigma)
f(t_1-C_\sigma h, Q_\sigma) \dif \sigma, \;\;\tau \in I, \\
     \label{eq:rever2}
&q_{0}=q_1- h q'_1+h^2 \int_I \bar{B}_\tau w(\tau) f(t_1-C_\tau h, Q_\tau) \dif\tau, \\
     \label{eq:rever3}
&q'_0 = q'_1-h\int_I B_\tau w(\tau) f(t_1-C_\tau h, Q_\tau)
\dif\tau.
   \end{alignat}
\end{subequations}
Note that $t_1-C_\tau h=t_0+(1-C_\tau)h$,  \eqref{eq:rever3} becomes
\begin{equation}\label{recast01}
q'_1=q'_0+h\int_I B_\tau w(\tau)f(t_0+(1-C_\tau)h, Q_\tau) \dif\tau,
\end{equation}
substituting it into \eqref{eq:rever2} then we get
\begin{equation}\label{recast02}
q_1=q_{0}+hq'_0+h^2 \int_I (B_\tau-\bar{B}_\tau) w(\tau)
f(t_0+(1-C_\tau)h, Q_\tau)\dif\tau.
\end{equation}
Next, inserting \eqref{recast01} and \eqref{recast02} into
\eqref{eq:rever1}, it follows that
\begin{equation}\label{recast03}
Q_\tau=q_0
+h(1-C_\tau)q'_0+h^2\int_I\Big(B_\sigma(1-C_\tau)-\bar{B}_\sigma+\bar{A}_{\tau,
\sigma}\Big) w(\sigma) f(t_0+(1-C_\sigma) h, Q_\sigma) \dif \sigma.
\end{equation}
By a change of variables (replacing $\tau$ and $\sigma$ with
$1-\tau$ and $1-\sigma$ respectively), \eqref{recast03},
\eqref{recast02} and \eqref{recast01} can be recast as
\begin{equation}\label{adjo}
\begin{split}
&Q^*_\tau=q_0+hC^*_\tau q'_0 +h^2\int_I\bar{A}^*_{\tau,
\sigma}w(1-\sigma)
f(t_0+C^*_\sigma h, Q^*_\sigma) \dif \sigma, \;\;\tau \in I, \\
&q_{1}=q_0+ h q'_0+h^2 \int_I \bar{B}^*_\tau w(1-\tau) f(t_0+C^*_\tau h, Q^*_\tau) \dif\tau, \\
&q'_1 = q'_0 +h\int_I B^*_\tau w(1-\tau) f(t_0+C^*_\tau h, Q^*_\tau)
\dif\tau,
\end{split}
\end{equation}
where $Q^*_\tau=Q_{1-\tau},\,\tau\in I$ and
\begin{equation}\label{adjo:coe}
\begin{split}
C^*_\tau&=1-C_{1-\tau},\\
\bar{A}^*_{\tau,\sigma}&=B_{1-\sigma}(1-C_{1-\tau})-\bar{B}_{1-\sigma}+\bar{A}_{1-\tau,
1-\sigma},\\
\bar{B}^*_\tau&=B_{1-\tau}-\bar{B}_{1-\tau},\\
B^*_\tau&=B_{1-\tau},
\end{split}
\end{equation}
for $\tau,\,\sigma\in I$. Consequently, we get the adjoint method
given by (\ref{adjo}-\ref{adjo:coe}). Hence if we require
\begin{equation*}
C_\tau=C^*_\tau,\,\bar{A}_{\tau,\sigma}w(\sigma)=\bar{A}^*_{\tau,\sigma}w(1-\sigma),
\,\bar{B}_\tau w(\tau)=\bar{B}^*_\tau w(1-\tau),\,B_\tau
w(\tau)=B^*_\tau w(1-\tau),
\end{equation*}
then the original csRKN method is symmetric. We summarize the
results above in the following theorem.
\begin{thm}\label{symm_cond_ori}
If a csRKN method (\ref{eq:csrkn1}-\ref{eq:csrkn3}) satisfies
\begin{equation}\label{sym_conds01}
\begin{split}
C_\tau&=1-C_{1-\tau},\\
\bar{A}_{\tau,\sigma}w(\sigma)&=\Big(B_{1-\sigma}(1-C_{1-\tau})-
\bar{B}_{1-\sigma}+\bar{A}_{1-\tau,1-\sigma}\Big)w(1-\sigma),\\
\bar{B}_\tau w(\tau)&=\big(B_{1-\tau}-\bar{B}_{1-\tau}\big) w(1-\tau),\\
B_\tau w(\tau)&=B_{1-\tau} w(1-\tau),
\end{split}
\end{equation}
for $\forall\,\tau,\,\sigma\in I$, then the method is symmetric.
Particularly, if the weight function $w(x)$ satisfies $w(x)\equiv
w(1-x)$, then the symmetric condition \eqref{sym_conds01} becomes
\begin{equation}\label{sym_conds02}
\begin{split}
C_\tau&=1-C_{1-\tau},\\
\bar{A}_{\tau,\sigma}&=B_{1-\sigma}(1-C_{1-\tau})-\bar{B}_{1-\sigma}+\bar{A}_{1-\tau,1-\sigma},\\
\bar{B}_\tau &=B_{1-\tau}-\bar{B}_{1-\tau},\\
B_\tau&=B_{1-\tau},
\end{split}
\end{equation}
for $\forall\,\tau,\,\sigma\in I$.
\end{thm}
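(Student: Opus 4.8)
The plan is to compute the adjoint method $\phi_h^*=\phi_{-h}^{-1}$ explicitly and then enforce $\phi_h^*=\phi_h$ by matching the integrand data. First I would form the adjoint by interchanging $(t_0,q_0,q'_0,h)$ with $(t_1,q_1,q'_1,-h)$ in \eqref{eq:csrkn1}-\eqref{eq:csrkn3} and then solving the resulting relations \eqref{eq:rever1}-\eqref{eq:rever3} for the ``new'' output $(q_1,q'_1)$ in terms of the ``old'' input $(q_0,q'_0)$. The key algebraic device is the identity $t_1-C_\tau h=t_0+(1-C_\tau)h$, which re-centers everything at $t_0$: it turns \eqref{eq:rever3} into the explicit update \eqref{recast01} for $q'_1$, and substituting this back into \eqref{eq:rever2} and then \eqref{eq:rever1} yields \eqref{recast02} and \eqref{recast03}, i.e. closed expressions for $q_1$ and for the internal stages $Q_\tau$.

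Second, I would bring the adjoint map into the same functional shape as the original definition so that its Butcher data can be read off directly. The natural move is the reflection $\tau\mapsto 1-\tau$, $\sigma\mapsto 1-\sigma$ applied to \eqref{recast03}, \eqref{recast02}, \eqref{recast01}; relabelling the stage variable as $Q^*_\tau=Q_{1-\tau}$ casts the adjoint in the canonical form \eqref{adjo}, from which the adjoint coefficients \eqref{adjo:coe} are immediate. Requiring that these agree with the original coefficients---crucially, comparing the full integrand weights $\bar{A}_{\tau,\sigma}w(\sigma)$, $\bar{B}_\tau w(\tau)$, $B_\tau w(\tau)$ rather than the bare coefficients, since $w$ sits inside the integrals---produces precisely \eqref{sym_conds01}. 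The special case then drops out by cancelling $w$ when $w(x)\equiv w(1-x)$.

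The step that needs the most care, and the reason for the two standing hypotheses on $I$, is the change of variables $\sigma\mapsto 1-\sigma$ under the integral sign. For this substitution to return an integral over the \emph{same} interval $I$---so that the reflected integrand can legitimately be compared term by term with the original---the interval must be invariant under $x\mapsto 1-x$; this holds exactly when $I=[a,b]$ with $a+b=1$ or $I=(-\infty,+\infty)$, which is why these are the admissible cases. The differential $\dif\sigma$ is invariant (the sign from $\dif(1-\sigma)$ is absorbed by the reversed orientation of the limits), so the only residual effect of the reflection is to send $w(\sigma)$ to $w(1-\sigma)$, which is exactly the factor appearing in \eqref{sym_conds01}. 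Once this reflection is justified, the remainder is bookkeeping: matching the four pieces of Butcher data, checking that the matched system is equivalent to \eqref{sym_conds01}, and verifying that symmetry of $w$ collapses it to \eqref{sym_conds02}.
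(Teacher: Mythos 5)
Your proposal is correct and follows essentially the same route as the paper: the paper likewise forms the adjoint by interchanging $(t_0,q_0,q'_0,h)$ with $(t_1,q_1,q'_1,-h)$, uses $t_1-C_\tau h=t_0+(1-C_\tau)h$ to solve for $(q_1,q'_1,Q_\tau)$, reflects $\tau\mapsto1-\tau$, $\sigma\mapsto1-\sigma$ with $Q^*_\tau=Q_{1-\tau}$ to read off the adjoint coefficients \eqref{adjo:coe}, and matches the full weighted data $\bar{A}_{\tau,\sigma}w(\sigma)$, $\bar{B}_\tau w(\tau)$, $B_\tau w(\tau)$ to obtain \eqref{sym_conds01}, with \eqref{sym_conds02} following when $w(x)\equiv w(1-x)$. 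Your explicit justification of the change of variables via the invariance of $I$ under $x\mapsto 1-x$ is a point the paper leaves implicit in its standing assumptions on $I$, but it is the same argument.
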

\begin{thm}\label{symm_quad}
If $w(x)\equiv w(1-x)$ and the coefficients of the underlying csRKN
method (\ref{eq:csrkn1}-\ref{eq:csrkn3}) satisfying
\eqref{sym_conds02}, then the RKN method with tableau
\eqref{RKN:qua} is symmetric, provided that the weights and nodes of
the quadrature formula satisfy $b_{s+1-i}=b_i$ and $c_{s+1-i}=1-c_i$
for all $i$.
\end{thm}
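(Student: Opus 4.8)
The plan is to mirror, at the discrete level, the derivation of the continuous adjoint carried out in \eqref{eq:rever1}--\eqref{adjo:coe}. By hypothesis the underlying csRKN method satisfies \eqref{sym_conds02} with $w(x)\equiv w(1-x)$, so it is symmetric by Theorem \ref{symm_cond_ori}; throughout we operate under the standing assumption $C_\tau=\tau$ used in forming the tableau \eqref{RKN:qua}. It therefore suffices to show that the adjoint of the $s$-stage RKN method \eqref{RKN:qua} coincides with the method itself, i.e., that its coefficients obey the classical (discrete) symmetry conditions for RKN methods.

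First I would construct the adjoint of \eqref{RKN:qua} exactly as in the continuous case: swap $h\leftrightarrow-h$ and $(q_0,q_0')\leftrightarrow(q_1,q_1')$ in \eqref{eq:rkn1}--\eqref{eq:rkn3}, then solve successively for $q_1'$, $q_1$ and $Q_i$. This produces a method of standard RKN form with node $1-c_i$, internal coefficient $(b_jB_j)(1-c_i)-(b_j\bar{B}_j)+b_j\bar{A}_{ij}$, $q_1$-weight $b_iB_i-b_i\bar{B}_i$ and $q_1'$-weight $b_iB_i$, where $B_j:=B_{c_j}$, $\bar{B}_j:=\bar{B}_{c_j}$, $\bar{A}_{ij}:=\bar{A}_{c_i,c_j}$. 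The discrete analog of the change of variables $\tau\mapsto1-\tau$ is the stage relabeling $i\mapsto s+1-i$; performing it and invoking the node symmetry $c_{s+1-i}=1-c_i$ brings the adjoint nodes back onto $\{c_i\}$ and yields the relabeled adjoint coefficients.

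Then I would compare the relabeled adjoint coefficients with the original ones. The nodes match since $1-c_{s+1-i}=c_i$, and the $q_1'$-weight identity $b_iB_i=b_{s+1-i}B_{s+1-i}$ follows at once from $b_{s+1-i}=b_i$, $c_{s+1-i}=1-c_i$ together with $B_\tau=B_{1-\tau}$ in \eqref{sym_conds02}; the $q_1$-weight identity reduces likewise to $\bar{B}_\tau=B_{1-\tau}-\bar{B}_{1-\tau}$. The crucial step is the internal coefficient: substituting the quadrature symmetry into the relabeled expression, the weight $b_{s+1-j}=b_j$ factors out to give $b_j\big(c_iB_{1-c_j}-\bar{B}_{1-c_j}+\bar{A}_{1-c_i,1-c_j}\big)$, and by \eqref{sym_conds02} (with $1-C_{1-\tau}=\tau$, since $C_\tau=\tau$) the bracket is exactly $\bar{A}_{c_i,c_j}$, so the whole expression equals $b_j\bar{A}_{ij}$, as required.

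The main obstacle I expect is the bookkeeping in this last step: one must keep careful track of the fact that the quadrature weight $b_j$ attaches to the $j$-th column of the tableau, so that under the reversal $j\mapsto s+1-j$ it is the hypothesis $b_{s+1-j}=b_j$ (rather than the node symmetry) that is needed, while simultaneously $1-c_{s+1-i}=c_i$ must play the role of the factor $1-C_{1-\tau}$ coming from \eqref{sym_conds02}. Once these index correspondences are aligned, each discrete symmetry condition collapses onto the corresponding continuous condition of \eqref{sym_conds02} sampled at the quadrature nodes, completing the proof. Notably, the weight function $w$ enters only through the definition of the $b_i$ in \eqref{wquad}; the final verification itself uses nothing beyond $b_{s+1-i}=b_i$, $c_{s+1-i}=1-c_i$, and \eqref{sym_conds02}.
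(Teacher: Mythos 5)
Your proof is correct, and it is essentially the approach the paper intends: the paper's ``proof'' is only a pointer to Theorem 4.1 of \cite{Tangz18sib}, and your argument is precisely the discrete mirror of the continuous adjoint computation the paper itself carries out in \eqref{eq:rever1}--\eqref{adjo:coe}, with the stage relabeling $i\mapsto s+1-i$ playing the role of $\tau\mapsto 1-\tau$. All three coefficient verifications check out, including the key internal-coefficient step where $b_{s+1-j}=b_j$, $1-c_{s+1-i}=c_i$ and the second line of \eqref{sym_conds02} (with $1-C_{1-\tau}=\tau$) collapse the relabeled adjoint entry onto $b_j\bar{A}_{ij}$, so your write-up is a valid self-contained substitute for the cited proof.
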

\begin{proof}
Please refer to Theorem 4.1 of \cite{Tangz18sib} for a similar
proof.
\end{proof}
\begin{rem}
Theorem \ref{symm_cond_ori} and \ref{symm_quad} is also applicable
for the case of csRKN method (\ref{Heq:csrkn1}-\ref{Heq:csrkn3}).
\end{rem}

By using orthogonal polynomial expansion technique, we acquire a
useful result for designing symmetric integrators.
\begin{thm}\label{symmcon3}
Suppose that $w(x)\equiv
w(1-x),\,C_\tau=\tau,\,\bar{B}_\tau=B_\tau(1-C_\tau)$ and
$\bar{A}_{\tau,\sigma}/B_\sigma\in L_w^2(I\times I)$, then the
symmetric condition \eqref{sym_conds01} is equivalent to the fact
that $\bar{A}_{\tau,\sigma}$ has the following form in terms of the
orthogonal polynomials $P_n(x)$ in $L_w^2(I)$
\begin{equation}\label{symmconvari}
\bar{A}_{\tau,\sigma}=B_\sigma\Big(\alpha_{(0,0)}+\alpha_{(0,1)}P_1(\sigma)
+\alpha_{(1,0)}P_1(\tau)+\sum\limits_{i+j\,\text{is}\,\text{even}
\atop 1<i+j\in \mathbb{Z}}\alpha_{(i,j)}
P_i(\tau)P_j(\sigma)\Big),\quad\alpha_{(i,j)}\in \mathbb{R},
\end{equation}
with $B_{\sigma}\equiv B_{1-\sigma}$, where
$\alpha_{(0,1)}=-\alpha_{(1,0)}=-\frac{1}{2}\big<x,\,P_1(x)\big>_w$,
provided that the orthogonal polynomials $P_n(x)$ satisfy
\begin{equation}\label{symm_relation}
P_n(1-x)=(-1)^nP_n(x),\;n\in \mathbb{Z}.
\end{equation}
\end{thm}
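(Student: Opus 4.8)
The plan is to mimic the proof of Theorem \ref{constr_symcsRKN}, replacing the symplectic reflection $\tau\leftrightarrow\sigma$ by the time-reversal reflection $x\mapsto 1-x$. First I would use the hypothesis $w(x)\equiv w(1-x)$ to pass from the general symmetric condition \eqref{sym_conds01} to its simplified form \eqref{sym_conds02} (as granted by Theorem \ref{symm_cond_ori}). Then, inserting $C_\tau=\tau$ and $\bar{B}_\tau=B_\tau(1-\tau)$ into \eqref{sym_conds02}, I expect the first component $C_\tau=1-C_{1-\tau}$ to hold automatically and the third component $\bar{B}_\tau=B_{1-\tau}-\bar{B}_{1-\tau}$ to collapse onto the fourth $B_\tau=B_{1-\tau}$ (both sides reduce to a multiple of $1-\tau$). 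This leaves two essential requirements: $B_\sigma\equiv B_{1-\sigma}$, and, after using $1-C_{1-\tau}=\tau$ and $\bar{B}_{1-\sigma}=B_{1-\sigma}\sigma$,
\begin{equation*}
\bar{A}_{\tau,\sigma}-\bar{A}_{1-\tau,1-\sigma}=B_\sigma(\tau-\sigma).
\end{equation*}

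Next I would divide by $B_\sigma$ (assuming $B_\sigma\neq0$, since otherwise the method has no order) and invoke $B_\sigma=B_{1-\sigma}$ to write the left side as $\bar{A}_{\tau,\sigma}/B_\sigma-\bar{A}_{1-\tau,1-\sigma}/B_{1-\sigma}$. Expanding $\bar{A}_{\tau,\sigma}/B_\sigma=\sum_{i,j}\alpha_{(i,j)}P_i(\tau)P_j(\sigma)$ in the orthonormal basis $\{P_i(\tau)P_j(\sigma)\}$ of $L^2_w(I\times I)$ and applying \eqref{symm_relation} gives $\bar{A}_{1-\tau,1-\sigma}/B_{1-\sigma}=\sum_{i,j}(-1)^{i+j}\alpha_{(i,j)}P_i(\tau)P_j(\sigma)$, so the difference equals $\sum_{i,j}\big(1-(-1)^{i+j}\big)\alpha_{(i,j)}P_i(\tau)P_j(\sigma)=2\sum_{i+j\,\text{odd}}\alpha_{(i,j)}P_i(\tau)P_j(\sigma)$. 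On the right side, since $x=\sum_{i=0}^{1}\langle x,P_i\rangle_w P_i(x)$ and $P_0$ is constant, the data reduce to $\tau-\sigma=\langle x,P_1\rangle_w\big(P_1(\tau)-P_1(\sigma)\big)$. Matching coefficients then forces every odd-parity mode with $i+j>1$ to vanish, fixes $\alpha_{(1,0)}=\tfrac12\langle x,P_1\rangle_w$ and $\alpha_{(0,1)}=-\tfrac12\langle x,P_1\rangle_w$ (the factor $\tfrac12$ being exactly the reciprocal of the $2$ produced by the odd-parity projection), and leaves every even-parity coefficient free. Reassembling the surviving terms and multiplying back by $B_\sigma$ yields precisely \eqref{symmconvari}; reading each step as an equivalence delivers both directions of the claim.

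The main obstacle I anticipate is the bookkeeping in the first reduction: verifying that all four scalar equations of \eqref{sym_conds02} are correctly accounted for under $C_\tau=\tau$ and $\bar{B}_\tau=B_\tau(1-\tau)$, and in particular that the $\bar{B}$-equation genuinely reduces to $B_\sigma=B_{1-\sigma}$ rather than imposing a new constraint. The conceptually new ingredient, absent from the symplectic Theorem \ref{constr_symcsRKN}, is the parity relation \eqref{symm_relation}: it is what converts the reflection $x\mapsto1-x$ into the clean sign $(-1)^{i+j}$ on each basis mode, thereby splitting the expansion into an unconstrained even part and a fully determined odd part. I would emphasize that \eqref{symm_relation} is a genuine hypothesis on the family $\{P_n\}$, so that the equivalence is stated conditionally on it.
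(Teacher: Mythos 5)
Your proposal is correct and follows essentially the same route as the paper's proof: reduce \eqref{sym_conds01} to \eqref{sym_conds02} via $w(x)\equiv w(1-x)$, derive $\bar{A}_{\tau,\sigma}-\bar{A}_{1-\tau,1-\sigma}=B_\sigma(\tau-\sigma)$ using $C_\tau=\tau$, $\bar{B}_\tau=B_\tau(1-\tau)$ and $B_\sigma\equiv B_{1-\sigma}$, divide by $B_\sigma$, expand $\bar{A}_{\tau,\sigma}/B_\sigma$ in the basis $\{P_i(\tau)P_j(\sigma)\}$, and use \eqref{symm_relation} together with $\tau-\sigma=\big<x,P_1(x)\big>_w\big(P_1(\tau)-P_1(\sigma)\big)$ to compare coefficients. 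Your explicit check that the $\bar{B}$-equation of \eqref{sym_conds02} collapses onto $B_\tau=B_{1-\tau}$, and your coefficient matching yielding the vanishing odd-parity modes and $\alpha_{(0,1)}=-\alpha_{(1,0)}=-\frac{1}{2}\big<x,P_1(x)\big>_w$, are slightly more detailed than the paper, which proves only necessity and declares sufficiency trivial, but the core argument is identical.
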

\begin{proof}
We only give the proof for the necessity, seeing that the
sufficiency part is rather trivial. Since under the assumption
$w(x)\equiv w(1-x)$, we have get \eqref{sym_conds02}. Hence, by
using $C_\tau=\tau,\,\bar{B}_\tau=B_\tau(1-C_\tau)$ and
$B_{\sigma}\equiv B_{1-\sigma}$, the second formula of
\eqref{sym_conds02} becomes
\begin{equation*}
\bar{A}_{\tau,\sigma}-\bar{A}_{1-\tau,1-\sigma}=B_{\sigma}(\tau-\sigma),
\end{equation*}
which leads to
\begin{equation}\label{ABsymm}
\frac{\bar{A}_{\tau,\sigma}}{B_{\sigma}}-\frac{\bar{A}_{1-\tau,1-\sigma}}{B_{1-\sigma}}=\tau-\sigma.
\end{equation}
Analogously to the proof of Theorem \ref{constr_symcsRKN}, let us
consider the expansion of $\bar{A}_{\tau,\,\sigma}/B_\sigma$ given
by \eqref{AdivB}. By using \eqref{AdivB} and \eqref{symm_relation},
it yields
\begin{equation*}
\bar{A}_{1-\tau,\,1-\sigma}/B_{1-\sigma}=\alpha_{(0,0)}-\alpha_{(0,1)}P_1(\sigma)
-\alpha_{(1,0)}P_1(\tau)+\sum\limits_{i+j>1}(-1)^{i+j}\alpha_{(i,j)}
P_i(\tau)P_j(\sigma).
\end{equation*}
By substituting \eqref{AdivB} and the above formula into
\eqref{ABsymm} and comparing the like basis, it gives
\eqref{symmconvari}.
\end{proof}

For the sake of employing Theorem \ref{symmcon3}, we also need some
useful results which are quoted from \cite{Tang18aef}.
\begin{thm}\cite{Tang18aef}
If $w(x)$ is an even function, i.e., satisfying $w(-x)\equiv w(x)$,
then the shifted function defined by $\widehat{w}(x)=w(2\theta
x-\theta)$ satisfies the symmetry relation: $\widehat{w}(x)\equiv
\widehat{w}(1-x)$. Here $\theta$ is a non-zero constant.
\end{thm}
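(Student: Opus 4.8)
The plan is to verify the symmetry relation $\widehat{w}(x)\equiv\widehat{w}(1-x)$ by direct substitution, exploiting the evenness hypothesis $w(-x)\equiv w(x)$.
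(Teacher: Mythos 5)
Your plan is correct and is the only natural argument; the paper itself states this result without proof (citing [Tang18aef]), and the intended verification is exactly your direct substitution: $\widehat{w}(1-x)=w\bigl(2\theta(1-x)-\theta\bigr)=w\bigl(-(2\theta x-\theta)\bigr)=w(2\theta x-\theta)=\widehat{w}(x)$ by evenness. Do write out this one-line computation rather than leaving it as a plan, but there is nothing further to add.
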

\begin{thm}\cite{Tang18aef}\label{shiftpol}
If a sequence of polynomials $\{P_n(x)\}_{n=0}^\infty$ satisfy the
symmetry relation
\begin{equation}\label{symm_relat}
P_n(-x)=(-1)^nP_n(x),\;n\in \mathbb{Z},
\end{equation}
then the shifted polynomials defined by
$\widehat{P}_n(x)=P_n(2\theta x-\theta)$ are bound to satisfy the
property \eqref{symm_relation}. Here $\theta$ is a non-zero
constant.
\end{thm}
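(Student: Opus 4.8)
The plan is to verify the claimed symmetry directly by a single affine change of variables. The target is to show that the shifted polynomials $\widehat{P}_n(x)=P_n(2\theta x-\theta)$ obey property \eqref{symm_relation}; that is, $\widehat{P}_n(1-x)=(-1)^n\widehat{P}_n(x)$ for all $n$. The guiding observation is that the affine substitution $x\mapsto 2\theta x-\theta$ transforms the reflection $x\mapsto 1-x$ (symmetry about the midpoint $1/2$) into the reflection $y\mapsto -y$ (symmetry about the origin), which is precisely the symmetry assumed for $P_n$ in \eqref{symm_relat}.

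Carrying this out, I would begin from the definition and substitute $1-x$ for $x$, giving $\widehat{P}_n(1-x)=P_n\big(2\theta(1-x)-\theta\big)$. The lone algebraic step is to simplify the argument: $2\theta(1-x)-\theta=\theta-2\theta x=-(2\theta x-\theta)$. I would then apply the hypothesis \eqref{symm_relat} with $y:=2\theta x-\theta$ to obtain $P_n\big(-(2\theta x-\theta)\big)=(-1)^n P_n(2\theta x-\theta)$, and finally re-read the right-hand side as $(-1)^n\widehat{P}_n(x)$. Chaining these equalities yields $\widehat{P}_n(1-x)=(-1)^n\widehat{P}_n(x)$, completing the verification.

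There is no genuine obstacle here: the only point needing the slightest care is the argument simplification, where one checks that the shift is centred so that $1-x$ is sent exactly to the negative of the image of $x$. This is where the particular form $2\theta x-\theta$ (with its leading coefficient and constant term matched so that the centre of reflection lands at the origin) is essential. I would also remark that the assumption $\theta\neq0$ is what guarantees $\widehat{P}_n$ retains the degree $n$ of $P_n$, so that the shifted family remains a legitimate polynomial sequence of the kind required by Theorem \ref{symmcon3}.
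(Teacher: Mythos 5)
Your verification is correct: the single computation $\widehat{P}_n(1-x)=P_n\bigl(2\theta(1-x)-\theta\bigr)=P_n\bigl(-(2\theta x-\theta)\bigr)=(-1)^nP_n(2\theta x-\theta)=(-1)^n\widehat{P}_n(x)$ is exactly what is needed, and your observation that the affine map $x\mapsto 2\theta x-\theta$ carries the reflection about $x=\tfrac{1}{2}$ to the reflection about the origin is the whole content of the result. The paper itself states this theorem without proof, quoting it from \cite{Tang18aef}, so there is no internal argument to compare against; your direct substitution is the natural (and essentially only) route, and your closing remark that $\theta\neq 0$ preserves $\deg\widehat{P}_n=n$ is a correct, if inessential, addition.
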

\begin{thm}\cite{Tang18aef}
If a sequence of polynomials $\{P_n(x)\}_{n=0}^\infty$ satisfy
\eqref{symm_relation}, then we have
\begin{equation*}
\int_0^1P_j(x)\,\dif
x=0,\;\;\text{for}\;\;j\;\;\text{is}\;\;\text{odd},
\end{equation*}
and the following function
\begin{equation}
B_\sigma=\sum\limits_{j=0}^{\xi-1}\int_0^1P_j(x)\,\dif x
P_j(\sigma)+\sum\limits_{\text{even}\;j\geq \xi}\lambda_j
P_j(\sigma),\;\; \xi\geq1,
\end{equation}
always satisfies $B_{\sigma}\equiv B_{1-\sigma}$.
\end{thm}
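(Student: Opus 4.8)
The plan is to prove the two assertions in turn, the first of which feeds directly into the second. For the vanishing of the odd moments, I would start from $\int_0^1 P_j(x)\,\dif x$ and perform the substitution $x\mapsto 1-x$. Since $\dif x$ is invariant up to a sign that is absorbed by reversing the limits of integration, and the symmetry relation \eqref{symm_relation} gives $P_j(1-x)=(-1)^jP_j(x)$, this yields $\int_0^1 P_j(x)\,\dif x=(-1)^j\int_0^1 P_j(x)\,\dif x$. For odd $j$ the factor $(-1)^j=-1$ forces $2\int_0^1 P_j(x)\,\dif x=0$, hence $\int_0^1 P_j(x)\,\dif x=0$, as claimed.

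Next I would establish $B_\sigma\equiv B_{1-\sigma}$ by direct substitution. Replacing $\sigma$ by $1-\sigma$ in the defining series and applying \eqref{symm_relation} termwise gives
\[
B_{1-\sigma}=\sum_{j=0}^{\xi-1}\Big(\int_0^1 P_j(x)\,\dif x\Big)(-1)^j P_j(\sigma)+\sum_{\text{even}\;j\geq\xi}\lambda_j(-1)^jP_j(\sigma).
\]
In the second sum every index $j$ is even, so $(-1)^j=1$ and it reproduces the original tail verbatim. In the first (finite) sum I would split the range according to the parity of $j$: for even $j$ the factor $(-1)^j=1$ leaves the term unchanged, while for odd $j$ the coefficient $\int_0^1 P_j(x)\,\dif x$ already vanishes by the first part, so those terms contribute nothing to either $B_\sigma$ or $B_{1-\sigma}$. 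Collecting the surviving terms recovers exactly the series defining $B_\sigma$, which proves $B_{1-\sigma}=B_\sigma$.

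There is no substantial obstacle here: both claims reduce to the single change of variables $x\mapsto 1-x$ combined with the parity relation \eqref{symm_relation}. The only point requiring a moment's care is recognizing that the odd-indexed terms of the finite sum must be discarded using the first assertion before the even/odd bookkeeping closes up; once the odd moments are known to vanish, the symmetry of $B_\sigma$ follows immediately.
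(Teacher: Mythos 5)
Your proof is correct. Note that the paper itself offers no proof of this statement: it is quoted verbatim from \cite{Tang18aef}, so there is no in-paper argument to compare against; your two-step argument --- the change of variables $x\mapsto 1-x$ combined with \eqref{symm_relation} to kill the odd moments, followed by termwise substitution in the series with the odd-indexed coefficients of the finite sum vanishing by the first part --- is the natural and surely the intended one, and it is complete. The only point worth flagging is that applying $P_j(1-\sigma)=(-1)^jP_j(\sigma)$ termwise to the infinite tail $\sum_{\text{even}\,j\geq\xi}\lambda_jP_j(\sigma)$ tacitly assumes the series converges pointwise (or that the reflection $u(x)\mapsto u(1-x)$ is applied in the $L^2_w$ sense, which requires $w(x)\equiv w(1-x)$, as holds in the paper's setting); in practice this is harmless because, per Remark \ref{rem:csRKN_trunc}, only finitely many $\lambda_j$ are kept nonzero.
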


As pointed out in \cite{Tang18aef}, many classical (standard)
orthogonal polynomials including Hermite polynomials, Legendre
polynomials, Chebyshev polynomials of the first and second kind, and
any other general Gegenbauer polynomials etc., do not satisfy
\eqref{symm_relation}, but they possess the symmetry property
\eqref{symm_relat}. Nevertheless, by using Theorem \ref{shiftpol} we
can always shift them to a suitable interval such that the condition
\eqref{symm_relation} is fulfilled. With these discussions, we can
also propose an operational \emph{procedure} for constructing
symmetric integrators in a similar way as that given in the
preceding subsection.

\subsection{Some examples}

In the following, we provide some examples for illustrating the
application of our theoretical results. On account of
\eqref{coef_Ats}, we only present the values of $\alpha_{(i,j)}$
with $i\leq j$ in our examples. Besides, the Gaussian-Christoffel's
quadrature rules (please see \eqref{wquad}) will be used, which
means the quadrature nodes $c_1, c_2,\cdots, c_s$ are exactly the
zeros of the normalized orthogonal polynomial $P_s(x)$ in
$L^2_w(I)$. For the sake of deriving symmetric methods we mainly
consider the weighted orthogonal polynomials shifted into a suitable
interval.

\begin{exa}\label{Legend}
Consider using the shifted normalized Legendre polynomials which are
orthogonal with respect to the weight function $w(x)=1$ on $[0,1]$.
These Legendre polynomials $L_n(x)$ can be defined by Rodrigues'
formula \cite{hairerw96sod}
\begin{equation*}
L_0(x)=1,\;L_n(x)=\frac{\sqrt{2n+1}}{n!}\frac{{\dif}^n}{\dif t^n}
\Big(t^n(t-1)^n\Big),\;x\in[0,1],\;\;n=1,2,\cdots.
\end{equation*}
\end{exa}
Let $\xi=3,\,\eta=2,\,\rho=2$ in \eqref{symBA}, $r=2$ and thus the
number of degrees of freedom is $(r+1)(r+2)/2=6$. For simplicity, we
set $\alpha_{(i,j)}=0$ for $0\leq i, j\leq2,i+j>2$. After some
elementary calculations, it gives
\begin{equation*}
\alpha_{(0,0)}=\frac{1}{6},\;\;\alpha_{(0,1)}=-\frac{\sqrt{3}}{12},\;
\;\alpha_{(1,1)}=\mu,\;\;\alpha_{(0,2)}=\frac{\sqrt{5}}{60},
\end{equation*}
where $\alpha_{(1,1)}=\mu$ is a free parameter, then we get a
$\mu$-parameter family of symmetric (by Theorem \eqref{symmcon3})
and symplectic csRKN methods of order $4$. By using
Gauss-Christoffel's quadrature rules with $2$ nodes, we get a family
of symmetric and symplectic RKN methods of order $4$ which are shown
in Tab. \ref{exa1:symp01} with $\gamma=\frac{1}{2}\mu$. It is found
that this family of methods coincides with the methods presented in
\cite{Tangsz18hos}.

\begin{table}
\[\ba{c|cc} \frac{3-\sqrt{3}}{6}&
\frac{1}{12}+\gamma&
\frac{1-\sqrt{3}}{12}-\gamma\\[2pt]
\frac{3+\sqrt{3}}{6} & \frac{1+\sqrt{3}}{12}-\gamma&
\frac{1}{12}+\gamma\\[2pt]
\hline & \frac{3+\sqrt{3}}{12}& \frac{3-\sqrt{3}}{12}\\[2pt]
\hline & \frac{1}{2}& \frac{1}{2} \ea \] \caption{A family of
$2$-stage $4$-order symmetric and symplectic RKN methods, based on
the shifted Legendre polynomials $L_n(x)$.}\label{exa1:symp01}
\end{table}

\begin{exa}\label{Cheby}
Consider using the shifted normalized Chebyshev polynomials of the
first kind which are orthogonal with respect to the weight function
$w(x)=\frac{1}{2\sqrt{x(1-x)}}$ on $[0,1]$. These Chebyshev
polynomials $T_n(x)$ can be defined by \cite{Tang18csm}
\begin{equation*}
T_0(x)=\frac{\sqrt{2}}{\sqrt{\pi}},\;\;T_n(x)=\frac{2\cos
\big(n\arccos(2x-1)\big)}{\sqrt{\pi}},\;x\in[0,1],\;\;n=1,2,\cdots.
\end{equation*}
\end{exa}
Let $\xi=3,\,\eta=2,\,\rho=2$ in \eqref{symBA} and set
$\alpha_{(i,j)}=0$ for $0\leq i, j\leq2,i+j>2$. After some
elementary calculations, it gives
\begin{equation*}
\alpha_{(0,0)}=\frac{5}{24},\;\;\alpha_{(0,1)}=-\frac{\sqrt{\pi}}{8},\;
\;\alpha_{(1,1)}=\mu,\;\;\alpha_{(0,2)}=\frac{\sqrt{2}}{64}\pi,
\end{equation*}
where $\alpha_{(1,1)}=\mu$ is a free parameter, then we get a
$\mu$-parameter family of symplectic csRKN methods of order at least
$3$. However, since we have
$\alpha_{(0,1)}=-\frac{1}{2}\big<x,\,T_1(x)\big>_w=-\frac{\sqrt{\pi}}{8}$
and other conditions of Theorem \ref{symmcon3} are fulfilled, the
newly-derived methods are symmetric and of an even order $4$. By
using Gauss-Christoffel's quadrature rules with $3$ nodes, we get a
family of symmetric and symplectic RKN methods of order $4$ which
are shown in Tab. \ref{exa2:symp01} with $\gamma=\frac{2\mu}{3\pi}$.

\begin{table}
\[\ba{c|ccc} \frac{2-\sqrt{3}}{4} & \frac{13}{216}+\gamma & \frac{85-60\sqrt{3}}{864} &
\frac{13-12\sqrt{3}}{216}-\gamma\\[2pt]
\frac{1}{2} &\frac{17+12\sqrt{3}}{432} & \frac{5}{108} &
\frac{17-12\sqrt{3}}{432}\\[2pt]
\frac{2+\sqrt{3}}{4} & \frac{13+12\sqrt{3}}{216}-\gamma &
\frac{85-60\sqrt{3}}{864} & \frac{13}{216}+\gamma\\[2pt]
\hline & \frac{2+\sqrt{3}}{18}  & \frac{5}{18} & \frac{2-\sqrt{3}}{18} \\[2pt]
\hline & \frac{2}{9}  & \frac{5}{9} & \frac{2}{9} \ea \]  \caption{A
family of $3$-stage $4$-order symmetric and symplectic RKN methods,
based on the shifted Chebyshev polynomials of the first kind
$T_n(x)$.}\label{exa2:symp01}
\end{table}

\begin{table}
\[\ba{c|ccc} \frac{2-\sqrt{6}}{4} & \frac{11}{216}+\gamma &
\frac{91-42\sqrt{6}}{432} & \frac{11-6\sqrt{6}}{216}-\gamma\\[2pt]
\frac{1}{2} & \frac{13+6\sqrt{6}}{432} &  \frac{7}{108} & \frac{13-6\sqrt{6}}{432}\\[2pt]
\frac{2+\sqrt{6}}{4} & \frac{11+6\sqrt{6}}{216}-\gamma &
\frac{91+42\sqrt{6}}{432} &
 \frac{11}{216}+\gamma\\[2pt]
\hline & \frac{2+\sqrt{6}}{36} & \frac{7}{18} & \frac{2-\sqrt{6}}{36}\\[2pt]
\hline & \frac{1}{9} & \frac{7}{9} & \frac{1}{9}\ea\] \caption{A
family of $3$-stage $4$-order symmetric and symplectic RKN methods,
based on the shifted Hermite polynomials
$\widehat{H}_n(x)$.}\label{exa3:symp01}
\end{table}

\begin{table}
\[\ba{c|ccc} -\frac{\sqrt{6}}{2} & \frac{4-3\sqrt{6}}{432}
& \frac{70-21\sqrt{6}}{108} &  \frac{40+87\sqrt{6}}{432}\\[2pt]
0 & \frac{-7-9\sqrt{6}}{216} &  \frac{7}{108} & \frac{-7+9\sqrt{6}}{216}\\[2pt]
\frac{\sqrt{6}}{2} & \frac{40-87\sqrt{6}}{432} &
\frac{70+21\sqrt{6}}{108} & \frac{4+3\sqrt{6}}{432} \\[2pt]
\hline & \frac{-5-\sqrt{6}}{36} & \frac{7}{9} & \frac{-5+\sqrt{6}}{36}\\[2pt]
\hline & \frac{4-3\sqrt{6}}{36} & \frac{7}{9} &
\frac{4+3\sqrt{6}}{36} \ea\] \caption{A $3$-stage $3$-order
symplectic RKN method (non-symmetric), based on the Hermite
polynomials $H_n(x)$.}\label{exa3:symp02}
\end{table}

\begin{exa}\label{Hermite}
Consider using the shifted normalized Hermite polynomials which are
orthogonal with respect to the weight function
$\widehat{w}(x)=e^{-(2x-1)^2}$ on $(-\infty,+\infty)$. These Hermite
polynomials $\widehat{H}_n(x)$ can be defined by \cite{Tang18aef}
\begin{equation*}
\widehat{H}_n(x)=\sqrt{2}H_n(2x-1),\;\;x\in(-\infty,+\infty),\;\;n=0,1,\cdots,
\end{equation*}
where $H_n(x)$ is the standard normalized $n$-degree Hermite
polynomial
\begin{equation}\label{standardHerm}
H_0(x)=\frac{1}{\pi^{\frac{1}{4}}},\;H_n(x)=\frac{(-1)^ne^{x^2}}{\sqrt{2^nn!}\pi^{\frac{1}{4}}}
\frac{\dif^n}{dx^n}\big(e^{-x^2}\big),\;\;x\in(-\infty,+\infty),\;\;n=1,2,\cdots,
\end{equation}
with the weight function given by $w(x)=e^{-x^2}$.
\end{exa}
Let $\xi=3,\,\eta=2,\,\rho=2$ in \eqref{symBA} and set
$\alpha_{(i,j)}=0$ for $0\leq i, j\leq2,i+j>2$. After some
elementary calculations, it gives
\begin{equation*}
\alpha_{(0,0)}=\frac{5}{24},\;\;\alpha_{(0,1)}=-\frac{\pi^{\frac{1}{4}}}{8},\;
\;\alpha_{(1,1)}=\mu,\;\;\alpha_{(0,2)}=\frac{\sqrt{2\pi}}{32},
\end{equation*}
where $\alpha_{(1,1)}=\mu$ is a free parameter, then we get a
$\mu$-parameter family of symplectic csRKN methods of order at least
$3$. However, since we have
$\alpha_{(0,1)}=-\frac{1}{2}\big<x,\,\widehat{H}_1(x)\big>_{\widehat{w}}
=-\frac{\pi^{\frac{1}{4}}}{8}$ and other conditions of Theorem
\ref{symmcon3} are fulfilled, the newly-derived methods are
symmetric and of an even order $4$. By using Gauss-Christoffel's
quadrature rules with $3$ nodes, we get a family of symmetric and
symplectic RKN methods of order $4$ which are shown in Tab.
\ref{exa3:symp01} with $\gamma=\frac{2\mu}{3\sqrt{\pi}}$.

We claim that if we do not use the shifted Hermite polynomials
$\widehat{H}_n(x)$, then it may result in symplectic methods without
the symmetric property. Let us consider using $H_n(x)$ to construct
symplectic methods, take the same $\xi=3,\,\eta=2,\,\rho=2$, and
also set $\alpha_{(i,j)}=0$ for $0\leq i, j\leq2,i+j>2$.
Additionally, we impose
$\alpha_{(0,1)}=-\frac{1}{2}\big<x,\,H_1(x)\big>_w
=-\frac{\sqrt{2}\pi^{\frac{1}{4}}}{4}$, then we get
\begin{equation*}
\alpha_{(0,0)}=\frac{7}{12},\;\;\alpha_{(0,1)}=-\frac{\sqrt{2}\pi^{\frac{1}{4}}}{4},\;
\;\alpha_{(1,1)}=-\frac{\sqrt{\pi}}{2},\;\;\alpha_{(0,2)}=\frac{\sqrt{2\pi}}{4}.
\end{equation*}
In such a case, we get a symplectic csRKN method with order $3$. One
can verify that such method does not satisfy all the bushy tree
order condition for order $4$ (see $\mathcal{B}(\xi)$), e.g.,
\begin{equation*}
\int_{-\infty}^{+\infty}B_\tau w(\tau) C_\tau^3\,\dif
\tau=\int_{-\infty}^{+\infty}\frac{7+6\tau-2\tau^2}{6\sqrt{\pi}}e^{-\tau^2}\tau^3\,\dif
\tau=\frac{3}{4}\neq\frac{1}{4},
\end{equation*}
hence it is of an odd order and can not be a symmetric method.
Besides, by using corresponding Gauss-Christoffel's quadrature rules
with $3$ nodes, we get a $3$-stage $3$-order symplectic RKN method
which is shown in Tab. \ref{exa3:symp02}. Although it looks like as
if the quadrature weights and nodes possess a kind of ``symmetry",
the method is essentially not symmetric according to the classical
symmetric conditions for RKN methods \cite{okunbors92ecm}.

\section{Numerical tests}\label{sec:numerical_examples}

In this section, we perform some numerical tests to verify our
theoretical results. For ease of description and comparison studies,
we denote our methods shown in
Tab.~\ref{exa1:symp01}-\ref{exa3:symp02} (with $\gamma=0$) by
Legendre-4, Chebyshev-4, Hermite-4 and Hermite-3 in turn and all of
them will be applied to two classical mechanical problems.

\begin{figure}
\begin{center}
\scalebox{0.9}[0.60]{\includegraphics{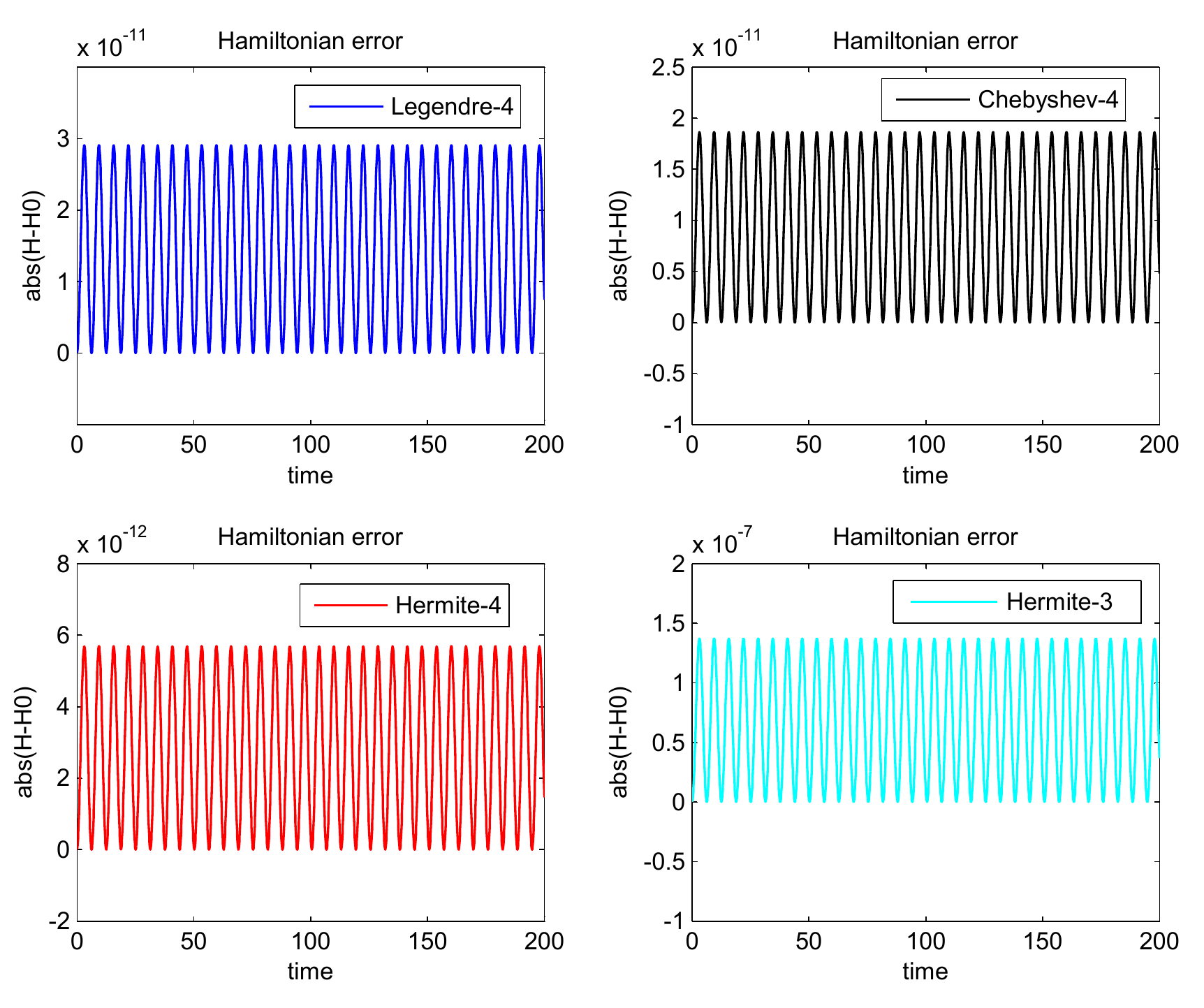}}
\caption{Energy (Hamiltonian) errors by four new symplectic RKN
methods for Kepler's problem, with step size $h=0.1$.}\label{ex1_f1}
\scalebox{0.9}[0.60]{\includegraphics{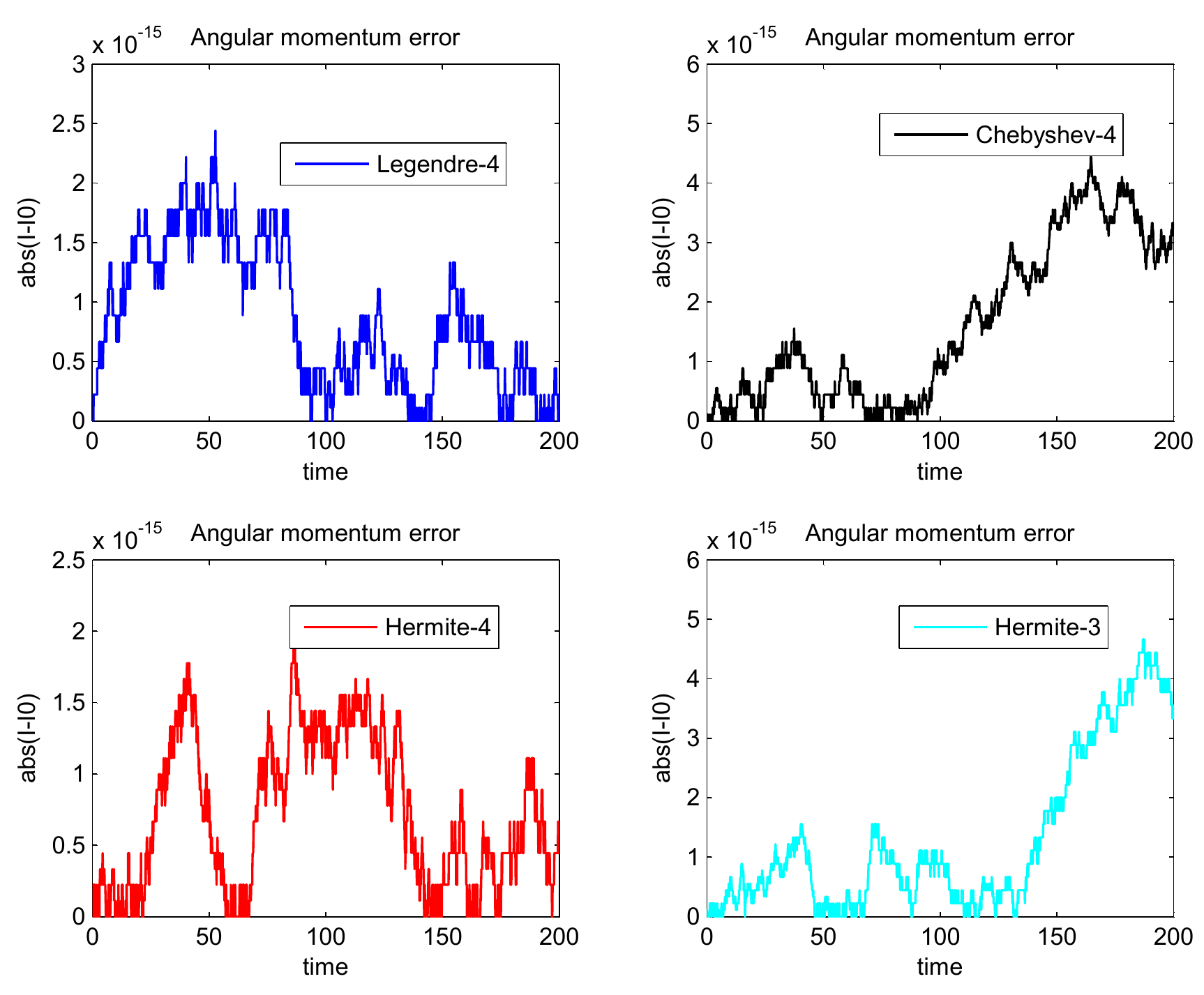}}
\caption{Angular momentum errors by four new symplectic RKN methods
for Kepler's problem, with step size $h=0.1$.}\label{ex1_f2}
\end{center}
\end{figure}

\begin{figure}
\begin{center}
\scalebox{0.9}[0.60]{\includegraphics{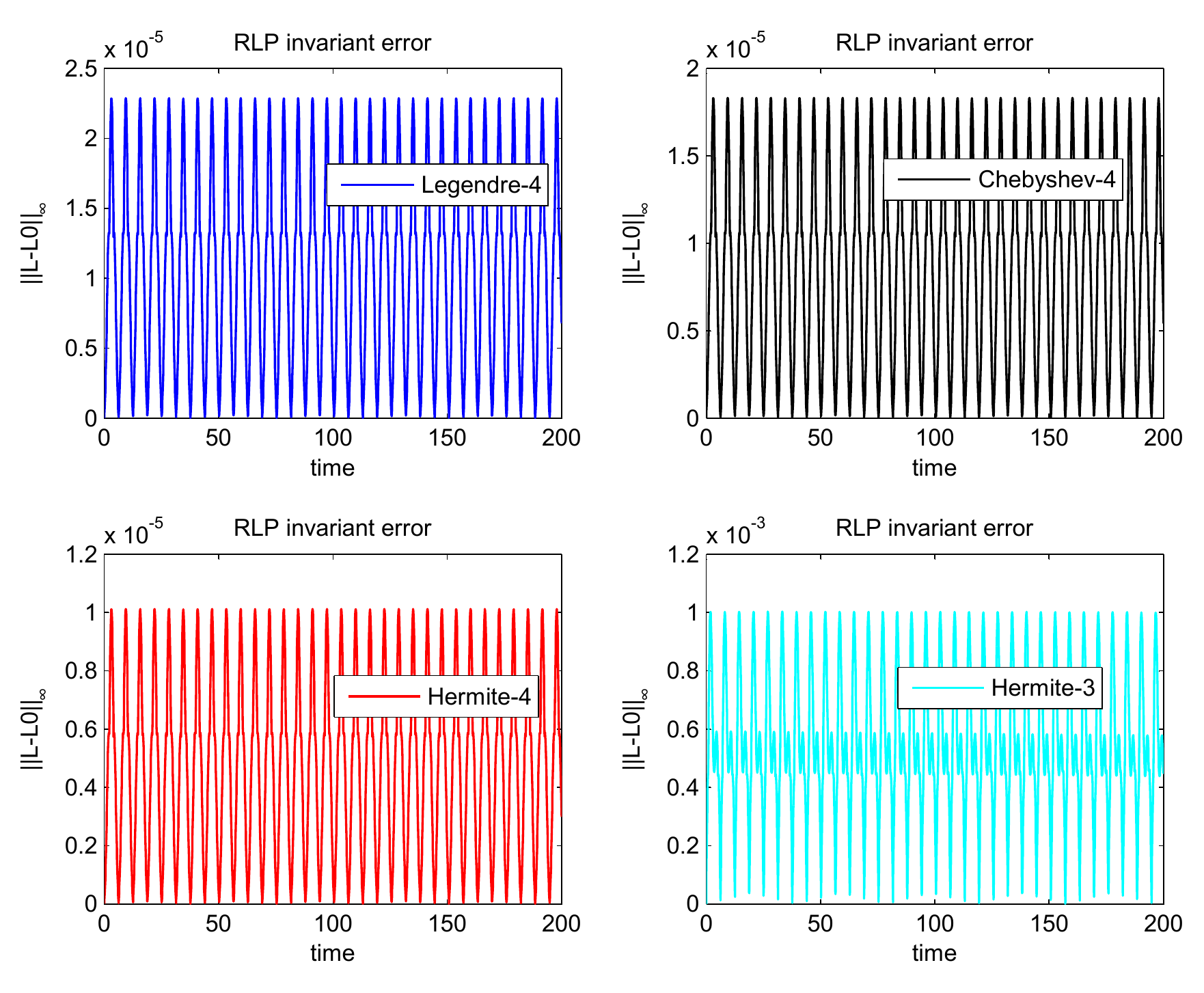}} \caption{RLP
invariant errors by four new symplectic RKN methods for Kepler's
problem, with step size $h=0.1$.}\label{ex1_f3}
\scalebox{0.9}[0.60]{\includegraphics{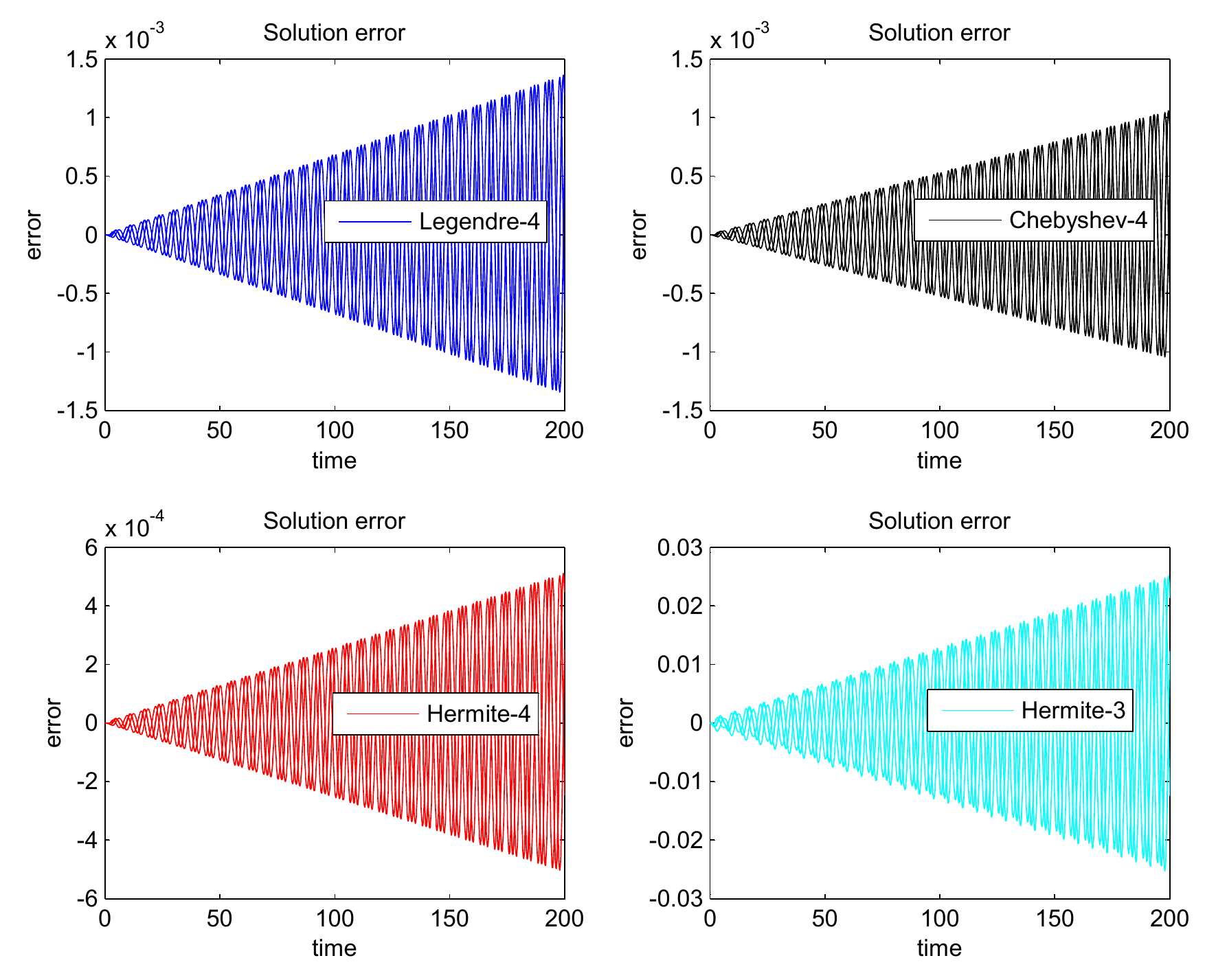}}
\caption{Solution errors by four new symplectic RKN methods for
Kepler's problem, with step size $h=0.1$.}\label{ex1_f4}
\end{center}
\end{figure}

\begin{figure}
\begin{center}
\scalebox{0.9}[0.60]{\includegraphics{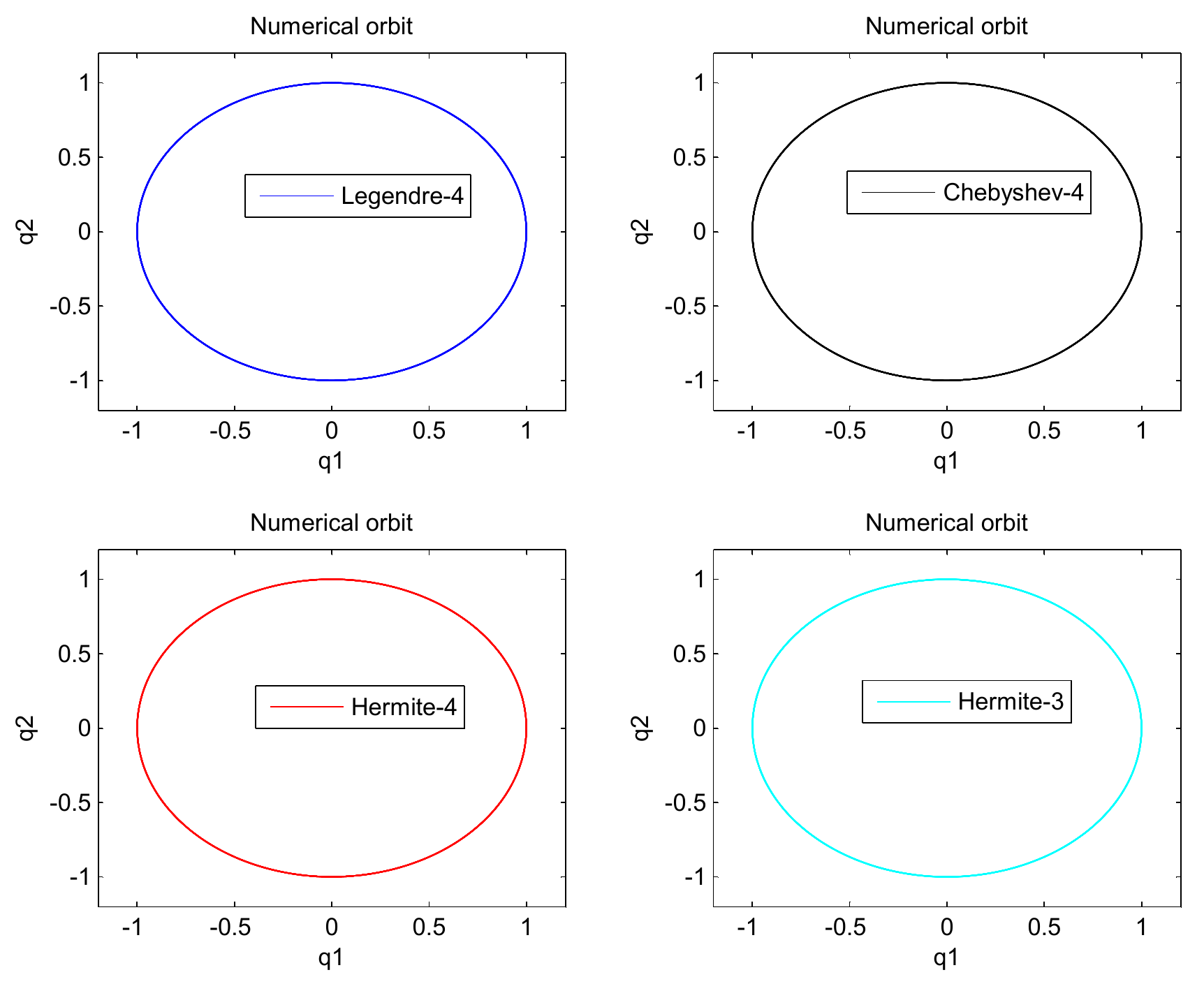}}
\caption{Numerical orbits by four new symplectic RKN methods for
Kepler's problem, with step size $h=0.1$.}\label{ex1_f5}
\end{center}
\end{figure}

\begin{exa}
Consider the numerical integration of the well-known Kepler's
problem \cite{hairerlw06gni}. The Kepler's problem describes the
motion of two bodies which attract each other under the universal
gravity. The motion of two-bodies can be described by
\begin{equation}\label{Kepler}
q''_1=-\frac{q_1}{(q_1^2+q_2^2)^{\frac{3}{2}}}, \quad
q''_2=-\frac{q_2}{(q_1^2+q_2^2)^{\frac{3}{2}}}.
\end{equation}
\end{exa}
By introducing the momenta $p_1=q'_1, p_2=q'_2$, we can transform
\eqref{Kepler} into a nonlinear Hamiltonian system with Hamiltonian
\begin{equation*}
H=\frac{1}{2}(p_1^2+p_2^2)-\frac{1}{\sqrt{q_1^2+q_2^2}}.
\end{equation*}
Beside the Hamiltonian, the system possesses other two invariants:
the quadratic angular momentum
\begin{equation*}
I=q_1p_2-q_2p_1=q^T\left(
\begin{array}{cc}
0 & 1 \\ -1 & 0 \\
\end{array}
\right)q',\;\;q=\left(
\begin{array}{c}
q_1 \\
q_2 \\
\end{array}
\right),
\end{equation*}
and the Runge-Lenz-Pauli-vector (RLP) invariant
\begin{equation*}
L=\left(
    \begin{array}{c}
      p_1 \\
      p_2 \\
      0 \\
    \end{array}
  \right)\times
  \left(
  \begin{array}{c}
  0 \\
  0 \\
  q_1p_2-q_2p_1 \\
  \end{array}
  \right)-\frac{1}{\sqrt{q_1^2+q_2^2}}\left(
  \begin{array}{c}
  q_1 \\
  q_2 \\
  0 \\
  \end{array}
  \right).
\end{equation*}
In our numerical tests, we take the initial values as
\begin{equation*}
q_1(0)=1, \;q_2(0)=0,\;p_1(0)=0, \;p_2(0)=1,
\end{equation*}
and the corresponding exact solution is
\begin{equation*}
q_1(t)=\cos(t),\;\;q_2(t)=\sin(t),\;\;p_1(t)=-\sin(t),\;\;p_2(t)=\cos(t).
\end{equation*}

Applying our symplectic integrators to \eqref{Kepler}, we compute
the approximation errors of the numerical solution to the exact
solution, as well as the errors in terms of the above three
invariants. These errors are shown in Fig.
\ref{ex1_f1}-\ref{ex1_f4}, where the errors at each time step are
carried out in the maximum norm
$||x||_{\infty}=\max(|x_1|,\cdots,|x_n|)$ for $x=(x_1,\cdots,x_n)\in
\mathbb{R}^n$. It indicates that all the symplectic integrators show
a near-preservation of the Hamiltonian and RLP invariant, and a
practical preservation (up to the machine precision) of the
quadratic angular momentum --- symplectic RKN methods can preserve
all quadratic invariants of the form $q^TDq'$ with $D$ a
skew-symmetric matrix (see \cite{hairerlw06gni}, page 104). The
solution errors of $p$-variable and $q$-variable measured in
Euclidean norm are shown in Fig. \ref{ex1_f4} which implies a linear
error growth. It is observed that amongst four methods the Hermite-4
method gives the best result, while the Hermite-3 method is inferior
to other three methods due to its lower accuracy. Moreover, all the
numerical orbits by four methods (see Fig. \ref{ex1_f5}) are in the
shape of an ellipse, closely approximating to the exact one (we do
not show it here). These numerical observations have well conformed
with the common features of symplectic integration.

\begin{figure}
\begin{center}
\scalebox{0.9}[0.60]{\includegraphics{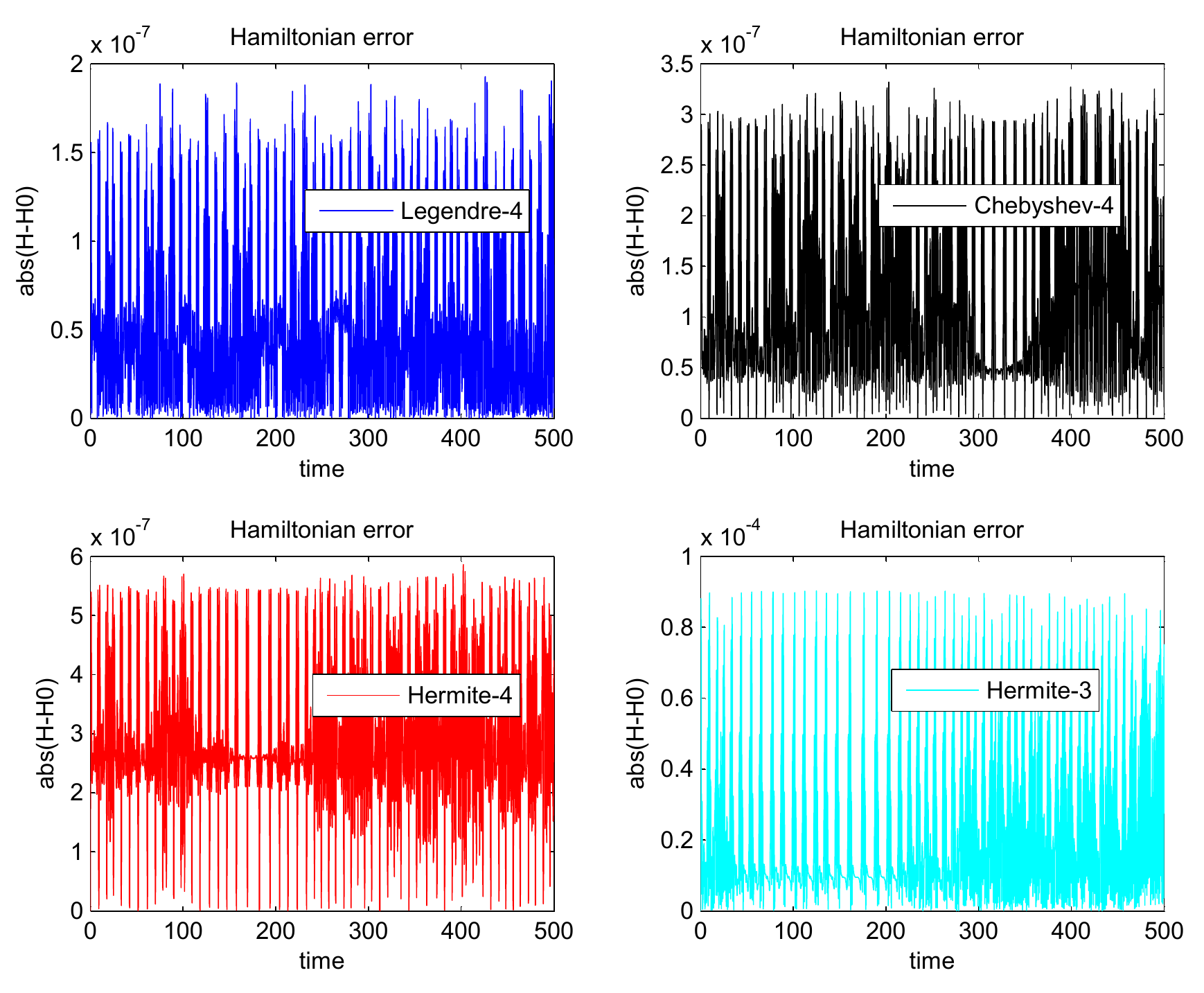}}
\caption{Energy (Hamiltonian) errors by four new symplectic RKN
methods for H\'{e}non-Heiles model problem, with step size
$h=0.1$.}\label{ex2_f1}
\scalebox{0.9}[0.60]{\includegraphics{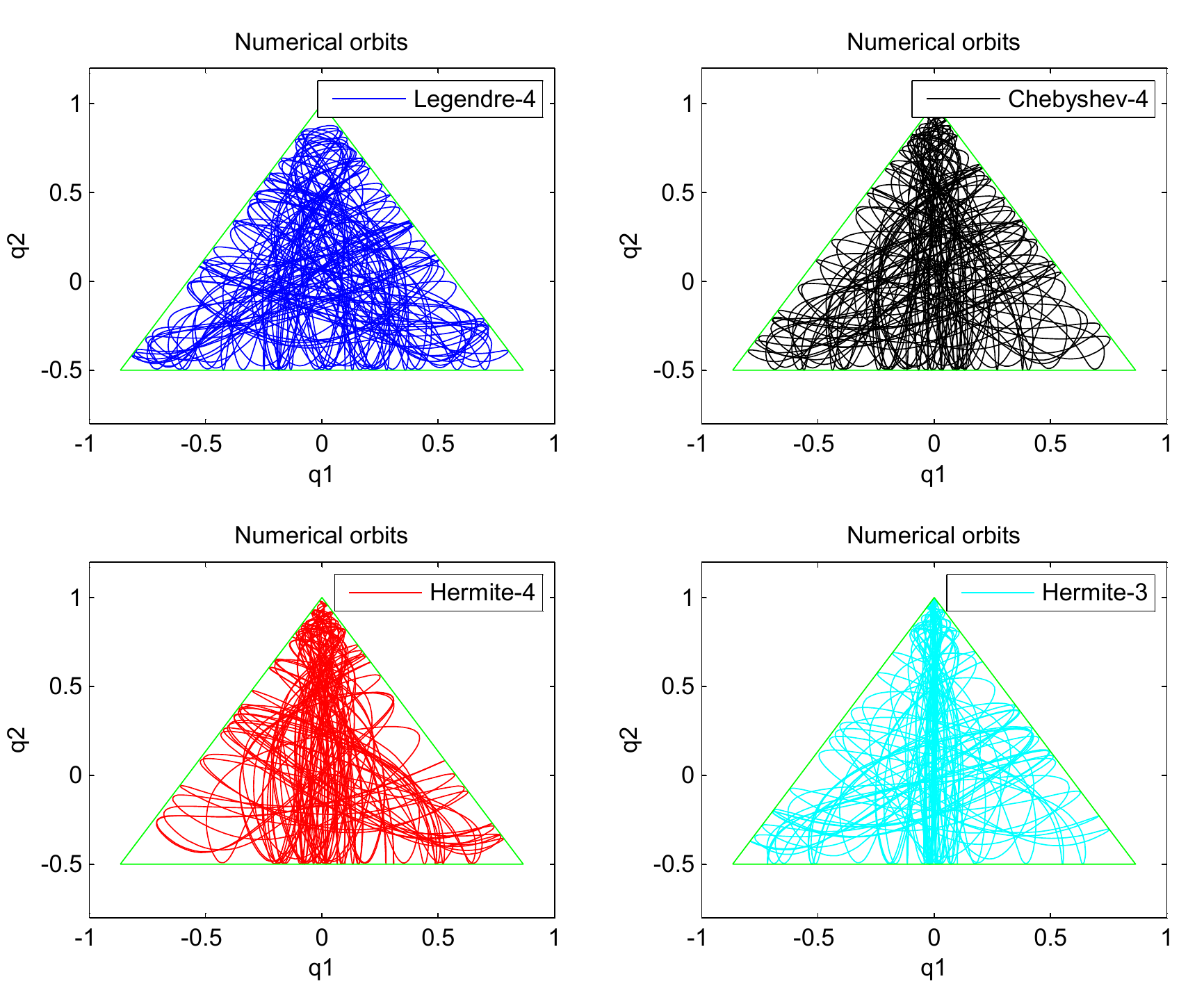}}
\caption{Chaotic orbits by four new symplectic RKN methods for
H\'{e}non-Heiles model problem, with step size
$h=0.1$.}\label{ex2_f2}
\end{center}
\end{figure}

\begin{exa}
Consider the numerical integration of the well-known
H\'{e}non-Heiles model problem \cite{hairerlw06gni}, which was
created for describing stellar motion. The problem can be described
by
\begin{equation}\label{HH}
q''_1=-q_1-2q_1q_2, \quad q''_2=-q_2-q_1^2+q_2^2.
\end{equation}
\end{exa}
It is clear to see that \eqref{HH} can be reduced to a first-order
Hamiltonian system determined by the Hamiltonian
\begin{equation*}
H=\frac{1}{2}(p_1^2+p_2^2)+\frac{1}{2}(q_1^2+q_2^2)+q_1^2q_2-\frac{1}{3}q_2^3.
\end{equation*}
In our experiment, the initial values are taken as
\begin{equation*}
q_1(0)=0.1, \;q_2(0)=-0.5,\;p_1(0)=0, \;p_2(0)=0,
\end{equation*}
which will result in a chaotic behavior and the chaotic orbits
should stay in the interior zone of an equilateral triangle
\cite{hairerlw06gni,quispelm08anc}. We present our numerical results
in Fig. \ref{ex2_f1} and \ref{ex2_f2}. It is observed that all the
symplectic methods have a well near-preservation of the energy (see
Fig. \ref{ex2_f1}) and they numerically reproduce the correct
behavior of the original system without points escaping from the
equilateral triangle (see Fig. \ref{ex2_f2}).

\section{Concluding remarks}

The constructive theory of continuous-stage Runge-Kutta-Nystr\"{o}m
methods is examined in this paper. We establish a new framework for
such methods by leading weight function into the formalism and
imposing the range of integration to be a general interval $I$
(finite or infinite). Particularly, we intensively discuss its
applications in the geometric integration of second-order
differential equations. A systematic way for deriving symplectic and
symmetric integrators is presented. We stress that our crucial
technique for deriving these geometric integrators is the orthogonal
polynomial expansion and the simplifying assumptions for order
conditions. It is hoped that in the forthcoming future other new
applications of the presented theoretical results will be
discovered.

\section*{Acknowledgements}

The author was supported by the National Natural Science Foundation
of China (11401055), China Scholarship Council and Scientific
Research Fund of Hunan Provincial Education Department (15C0028).

\end{document}